\algnewcommand{\Initialize}[1]{%
	\State \textbf{Initialization:}
	\Statex {\raggedright #1}
}
\newcommand{\EXP}[1]{\mathsf{E}\!\left[#1\right] }
\newtheorem{assumption}{Assumption}
\newtheorem{remark}{Remark}
\newtheorem{theorem}{Theorem}
\newtheorem{lemma}{Lemma}
\newtheorem{proposition}{Proposition}
\newcommand{\Real}{\mathbb{R}}
\newcommand{\green}[1]{{\color{black}#1}}
\newcommand{\bx}{\mathbf{x}}
\newcommand{\bunit}{\mathbf{1}}
\newcommand{\bG}{\mathbf{G}}
\newcommand{\bxi}{\boldsymbol{\xi}}
\newcommand{\txsum}{\textstyle\sum}
\newcommand{\sF}{\mathscr{F}}
\newcommand{\sg}{\mathscr{G}}
\newcommand{\frm}{\tfrac{1}{m}}
\title{\LARGE \bf Distributed Randomized Block Stochastic Gradient Tracking Method}
\author{Farzad Yousefian  \and Jayesh Yevale  \and Harshal D. Kaushik 
\thanks{* This work is supported by the CAREER grant ECCS-1944500.}
\thanks{The authors are with the School of Industrial Engineering and Management,
        Oklahoma State University, Stillwater, OK 74078, USA.  Emails: {\tt\small   <farzad.yousefian, jayesh.yevale, harshal.kaushik>@okstate.edu. }}%
}%
\begin{document}

\maketitle
\thispagestyle{empty}
\pagestyle{empty}

\begin{abstract} We consider distributed optimization over networks where each agent is associated with a smooth and strongly convex local objective function. We assume that the agents only have access to unbiased estimators of the gradient of their objective functions. Motivated by big data applications, our goal lies in addressing this problem when the dimensionality of the solution space is possibly large and consequently, the computation of the local gradient mappings may become expensive. We develop a randomized block-coordinate variant of the recently developed distributed stochastic gradient tracking (DSGT) method. We derive non-asymptotic convergence rates of the order $1/k$ and $1/k^2$ in terms of an optimality metric and a consensus violation metric, respectively. Importantly, while block coordinate schemes have been studied for distributed optimization problems before, the proposed algorithm appears to be the first randomized block-coordinate gradient tracking method that is equipped with the aforementioned convergence rate statements. We validate the performance of the proposed method on the MNIST and a synthetic data set under different network settings. 
\end{abstract}

\section{Introduction}
We consider the distributed optimization problem 
\begin{equation}\label{prob:main}
\begin{aligned}
\min_{x \in \mathbb{R}^n}&  \quad \sum\nolimits_{i=1}^{m} \mathbb{E}[{f_i(x,\xi_i)}],
\end{aligned}
\end{equation}
where agents communicate over an undirected graph denoted by $\mathcal{G} =\left(\mathcal{N},\mathcal{E}\right)$, where $\mathcal{N}$ is the node set and $\mathcal{E} \subseteq \mathcal{N}\times \mathcal{N}$.  Here, $\xi_i \in \mathbb{R}^d$ denotes a local random variable. For the ease of exposition, we let $f_i(\bullet,\xi_i)$ and $f_i(\bullet)\triangleq \mathbb{E}[{f_i(\bullet,\xi_i)}]$ denote the stochastic and deterministic local objectives, respectively. We consider the following assumption.
\begin{assumption}\label{assum:problem}
For all $i \in \{1,\ldots,m\}$, function $f_i(\bullet)$ is $\mu$-strongly convex and $L$-smooth.
\end{assumption}
Distributed optimization problems over networks and in particular problem \eqref{prob:main}, find a wide range of applications in statistical learning, wireless sensor networks, and control theory~(see \cite{AngeliaReviewDO2018} and ~\cite{Lobel_11,Nedic_16,Aybat_17,Shi_15, SunLuHong2020,Ling_14,XinSahuKhanKar2019} as examples). In this paper, motivated by large-scale applications, we are interested in addressing problem \eqref{prob:main} in stochastic and high-dimensional settings. To this end, we assume that agents only have access to noisy local gradient mappings denoted by $\nabla f_i(\bullet, \xi_i)$ satisfying the following standard assumption. 
\begin{assumption}\label{assum:stoch_errors}
For all $i \in \{1,\ldots, m\}$, $\xi_i$ are independent from each other. Also, for all $i$ and $x \in \Real^n$, 
\begin{align*} 
&\EXP{\nabla f_i(x,\xi_i)|x} = \nabla f_i(x), \hbox{ and }\\
&\EXP{\parallel \nabla f_i(x,\xi_i)- \nabla f_i(x)\parallel^2 | x} \leq \nu^2 \quad \hbox{for some } \nu > 0.
\end{align*}
\end{assumption}
To account for high-dimensionality, our goal in this work lies in the development of a randomized block-coordinate gradient scheme where at each iteration, agents evaluate only random blocks of their stochastic gradient mapping.  To this end, throughout, we consider a block structure for $x$ given by $x = \left[x^{(1)}; \ldots;x^{(b)}\right]$ where $x^{(\ell)} \in \mathbb{R}^{n_i}$ denotes the $\ell$-th block-coordinate of $x \in \mathbb{R}^n$ and $\sum_{\ell=1}^b n_\ell= n$. 

\noindent \textbf{Existing methods and research gap.} Among the recent advancements in distributed optimization algorithms, gradient tracking methods have been recently studied. In these schemes, agents track the average of the global gradient mapping through communicating their estimate of the gradient locally with their neighbors in convex \cite{NedichOlshevskyShi2017,Pu_Shi_Nedic_20,Pu_Nedic_20,FarzadPushPull2020}, and nonconvex regime \cite{LorenzoScutari2016,QuLi2017, Scutari_19,ScutariSun2019}. In \cite{Pu_Shi_Nedic_20}, Push-Pull, G-Push-Pull algorithms and their variants are developed for addressing distributed optimization over directed graphs and a linear rate of convergence was established. Recently, a stochastic variant of gradient tracking methods has been developed in \cite{Pu_Nedic_20}, namely the DSGT method, where non-asymptotic convergence rates of the order $1/k$ and $1/k^2$ in terms of an optimality metric and a consensus violation metric were derived, respectively. 
Further, in \cite{LiZhengWangYanFengGuo2020}, integrating the ideas from DIGing \cite{NedichOlshevskyShi2017} and a fast incremental gradient method (SAGA) \cite{DefazioBachJulien2014}, S-DIGing algorithm is developed. 

In the aforementioned schemes, agents have to evaluate full-dimensional gradient vectors at each iteration of the method.  A popular avenue for addressing this issue is the class of block-coordinate schemes. Block-coordinate schemes, and specifically their randomized variants, have been widely studied in addressing optimization problems and games in deterministic~\cite{Nesterov2012,RicktarikTakac2014,ShwartzZhang2013,FarzadHarshalACC19,KaushikYousefianSIOPT2021} and stochastic regimes~\cite{Dang15,FarzadSetValued18,FarzadTAC19}. In randomized block schemes, at each iteration only a randomly selected block of the gradient mapping is evaluated, requiring significantly lower computational effort per iteration than the standard schemes. Although block-coordinate schemes have been studied for distributed optimization problems before \cite{Lorenzo_16,Notarnicol_20}, the convergence rate statements of randomized block gradient tracking methods are not yet established.  Inspired by the DSGT method~\cite{Pu_Nedic_20}, our goal in this paper lies in extending DSGT to a randomized block variant that is equipped with new non-asymptotic performance guarantees.     

{\begin{table*}[h]
\tiny
	\renewcommand\thetable{1}
	\captionsetup{labelformat=empty}
	\captionof{table}{{Table 1}: Comparison of this work with other recent gradient tracking schemes for  distributed optimization}
	\label{table:schemes_literature}
	\centering
	
		\begin{tabular}{ l  l l  l l l p{10cm} }
			\hline
			Reference  & Method & \begin{minipage}{1cm}Problem class\end{minipage} & \begin{minipage}{2cm} Network topology\end{minipage} & Problem formulation  & Rate(s)  \\ 
			\cmidrule(r){1-6}
			 \cite{Pu_Nedic_20} &  DSGT, GSGT   &    $f_i \in C^{1,1}_{\mu, L}$   &   Undirected  &  \begin{minipage}{5cm}$\underset{x\in \mathbb{R}^n}{\text{min}}\tfrac{1}{m}\sum_{i = 1}^m f_i(x) \triangleq  \mathbb{E}[{F_i(x,\xi_i)}]$ \end{minipage} & \begin{minipage}{3cm}  suboptimality: ${\cal O} \left( 1/k\right)$ \\ consensus: ${\cal O} \left(1/k^2\right)$\end{minipage}  \\
			\cite{Pu_Shi_Nedic_20}   &\begin{minipage}{1.7cm} Push-Pull \\ G-Push-Pull \end{minipage}&    $f_i \in C_{\mu,L}^{1,1}$   &  Directed  & {$\underset{x\in \mathbb{R}^n}{\text{min}}\sum_{i = 1}^m f_i(x)$}  &  linear \\
			\cite{Notarnicol_20}  & Block-SONATA & \begin{minipage}{1.5cm}$f_i \in C^{1,1}, \\r_\ell \in C_{0,0}^{0,0}$\end{minipage}   & Directed & \begin{minipage}{4.5cm}  $\underset{x}{\text{min}}  \sum_{i = 1}^m f_i(x) + \sum_{l = 1}^B r_l(x_l) \\ \text{s.t. } x_l\in K_\ell, \ \ell\in\{1,\dots,B\}$ \end{minipage}  & $-$ \\
			\cite{XinSahuKhanKar2019} &   S-AB &$f_i \in C^{1,1}_{\mu, L}$  &Directed & \begin{minipage}{5cm}   $\underset{x\in \mathbb{R}^n}{\text{min}}  \tfrac{1}{m} \sum_{i = 1}^m f_i(x) \triangleq \mathbb{E}[{F_i(x,\xi_i)}]$ \end{minipage} & linear    \\
			\cite{LiCenChenChi2020} &  Network-DANE &$f_i \in C^{1,1}_{\mu, L}$  &Undirected & \begin{minipage}{3.5cm}   $\underset{x\in \mathbb{R}^n}{\text{min}}  \tfrac{1}{N} \sum_{i = 1}^N \ell(x;z_i)$, \\ where $N=$ total samples, $z_i$ is the $i^{\text{th}}$ sample.\end{minipage} & linear    \\
			\cite{LiZhengWangYanFengGuo2020} &  S-Diging &$f_i \in C^{1,1}_{\mu, L}$  &Undirected & \begin{minipage}{5cm}   $\underset{x\in \mathbb{R}^n}{\text{min}}  \sum_{i = 1}^m f_i(x)\triangleq \mathbb{E}[{F_i(x,\xi_i)}]$ \end{minipage} & linear    \\
			\cite{LuZhangSunHong2019} &  GNSD &$f_i \in C^{1,1}$  &Undirected & \begin{minipage}{5cm}  
					$\underset{x}{\text{min}}  \tfrac{1}{m} \sum_{i = 1}^m f_i(x) \triangleq \mathbb{E}[{F_i(x,\xi_i)}]$\\
					s.t. \ $x_i = x_j , \ j\in \mathcal{N}(i), \  \forall i$
			 \end{minipage} & $\mathcal{O}\left(1/\sqrt{k}\right)$    \\
			\cmidrule(r){1-6}
			\begin{minipage}{0.1cm} \textbf{This}\\ \textbf{work} \end{minipage}  &    DRBSGT  & \begin{minipage}{1.5cm}$f_i \in {C^{1,1}_{\mu, L}}$ \end{minipage} & Undirected & \begin{minipage}{3.5cm}   $\underset{x\in \mathbb{R}^n}{\text{min}}  \ \sum_{i = 1}^m f_i(x) \triangleq \mathbb{E}[{f_i(x,\xi_i)}]$ \end{minipage} & \begin{minipage}{3cm}  suboptimality: ${\cal O} \left( 1/k \right)$ \\ consensus: ${\cal O} \left(1/k^2\right)$\end{minipage}\\
			\hline
		\end{tabular}
		\vspace{-.2in}
\end{table*}}
\noindent \textbf{Main contributions.} 
To highlight our contributions, we have prepared Table \ref{table:schemes_literature}. Our main contributions are as follows:

\noindent (i) We  develop a distributed randomized block stochastic gradient tracking algorithm (DRBSGT) for solving problem \eqref{prob:main}.  At each iteration, each agent evaluates only random blocks of its local stochastic gradient mapping. Importantly, we assume that the agents choose both their stochastic gradient mapping and their random block-coordinates independently from each other (see Algorithm~\ref{algorithm:RB-DSGT}).  

\noindent (ii) In Theorem \ref{thm:rate}, we derive a rate of $\mathcal{O}(1/k)$ on a suboptimality and $\mathcal{O}(1/k^2)$ on a consensus violation metric for Algorithm \ref{algorithm:RB-DSGT}. Importantly,  while DRBSGT generalizes DSGT to a randomized block variant, these rate statements match with those of DSGT, indicating that there is no sacrifice in terms of the order of magnitude of the  rate statements. 

\noindent (iii) To validate {the} theoretical claims, we compare the performance of our scheme with that of other existing gradient tracking schemes and provide preliminary results on different data sets and under different network assumptions. 

\noindent {\textbf{Outline.}} The rest of the paper is organized as follows. Section \ref{sec:alg_outline} includes the algorithm outline and some preliminaries. Section \ref{sec:conv_anal} provides the error bounds for the optimality and the consensus violation in Proposition \ref{prop:recursions}. The rate results are presented in Theorem \ref{thm:rate}. Section \ref{sec:num_expt} includes the numerical experiments. We provide concluding remarks in Section \ref{sec:conclusion}.

\noindent \textbf{Notation.}  
We let $x^*$ to denote the unique global optimal solution of problem \eqref{prob:main}. We let $\mathcal{N}(i)$ denote the set of neighbors of agent $i$, i.e., $\mathcal{N}(i) \triangleq \{j\mid (i,j) \in \mathcal{E}\}$. We use $[m]$ to denote $\{1,2,\ldots,m\}$ for any integer $m \geq 1$. Throughout, we let $\|\bullet\|$ denote the Euclidean norm and Frobenius norm of a vector and a matrix, respectively. We define $\mathbf{U}_\ell \in \mathbb{R}^{n\times n_\ell}$ for $\ell \in [b]$ such that $\left[\mathbf{U}_1, \ldots,\mathbf{U}_b\right] =\mathbf{I}_n $ where $\mathbf{I}_n$ denotes the $n\times n$ identity matrix. Note that we can write $x = \sum_{\ell =1 }^b \mathbf{U}_\ell x^{(\ell)}$, $\|\mathbf{U}_{\ell}x^{(\ell)}\|^2 = \|x^{(\ell)}\|^2$, 
\begin{align}\label{eqn:prop3_U_ell}
\textstyle\sum_{\ell=1}^b\|\mathbf{U}_{\ell}x^{(\ell)}\|^2 = \|x\|^2.
\end{align}
We consider the following notation throughout the paper.
$
\mathbf{x} := [x_1,x_2,\ldots,x_m]^T, \quad  \mathbf{y}  := [y_1,y_2,\ldots,y_m]^T \in \Real^{m \times n}, $\\
$\bar{x} := \tfrac{1}{m}\mathbf{1}^T \mathbf{x}   \in \Real^{1 \times n}, \quad  \bar{y} := \tfrac{1}{m}\mathbf{1}^T \mathbf{y}    \in \Real^{1 \times n},$

\noindent 
$ f(x) \triangleq \textstyle\sum\nolimits_{i=1}^{m} f_i(x), \quad \mathbf{f}(\mathbf{x}) \triangleq  \sum\nolimits_{i=1}^{m} f_i(x_i),$\\
$ f_i(x) \triangleq \EXP{f_i(x,\xi_i)\mid x},\quad \boldsymbol{\xi} := [\xi_1,\xi_2,\ldots,\xi_m]^T\in \Real^{m \times d},$\\
$\boldsymbol{\ell} := [\ell_1,\ell_2,\ldots,\ell_m]^T\in \Real^{m \times 1}, \\
\mathbf{G}(\mathbf{x},\boldsymbol{\xi}) \triangleq [\nabla f_1(x_1,\xi_1), \ldots, \nabla f_m(x_m,\xi_m)]^T,$\\
$ \mathbf{G}(\mathbf{x}) \triangleq \EXP{\mathbf{G}(\mathbf{x},\boldsymbol{\xi})\mid \mathbf{x}}= \notag [\nabla f_1(x_1), \ldots, \nabla f_m(x_m)]^T,$\\
$G(\mathbf{x},\boldsymbol{\xi}) \triangleq \tfrac{1}{m}\mathbf{1}^T\mathbf{G}(\mathbf{x},\boldsymbol{\xi}) =\tfrac{1}{m}\textstyle\sum_{i=1}^m\nabla f_i(x_i,\xi_i),$

\noindent $  G(\mathbf{x}) \triangleq \EXP{G(\mathbf{x},\boldsymbol{\xi})\mid \mathbf{x}}, \quad \sg(x) \triangleq G(\mathbf{1}x^T) = \tfrac{1}{m}\nabla f(x).$
\section{Algorithm Outline}\label{sec:alg_outline}
\begin{algorithm*}[t]
  \caption{Distributed Randomized Block Stochastic Gradient Tracking (DRBSGT)}\label{algorithm:RB-DSGT}
    \begin{algorithmic}[1]
    \State\textbf{Input:} Agents choose $\gamma_{0} > 0$ the weight matrix $\mathbf{W}$.  For all $i \in [m]$, agent $i$ chooses a random initial point $x_{i,0} \in \mathbb{R}^n$
    \State For all $i \in [m]$, agent $i$ generates samples $\xi_{i,0}$ and $\ell_{i,0}$ and $y_{i,0}^{(\ell_{i,0})}:= \nabla^{\ell_{i,0}} f_i(x_{i,0},\xi_{i,0}) $ and $y_{i,0}^{(\ell)}:= 0$ for all $\ell \neq \ell_{i,0}$. 
    
    \For {$k=0,1,\ldots,$}
    		 \State For all $i \in [m]$, agent $i$ does the following update : 
    		     		$
    		  x_{i,k+1} := \ 
    		  \txsum\nolimits_{j=1}^{m} W_{ij}\left(x_{j,k}  - \gamma_{k}y_{j,k}\right).
$  

\State For all $i \in [m]$, agent $i$ generates realizations of the random variables $\xi_{i,k+1}$ and $\ell_{i,k+1}$.
\State For all $i \in [m]$, agent $i$ does the following update: 
\begin{align*}
y_{i,k+1}^{(\ell)} &:= \left\{\begin{array}{ll}
        \sum\nolimits_{j=1}^m W_{ij}y_{j,k}^{(\ell)} +\nabla^{(\ell)}f_i(x_{i,k+1},\xi_{i,k+1})-\nabla^{(\ell)} f_i(x_{i,k},\xi_{i,k}), & \text{if } \ell =\ell_{i,k+1} = \ell_{i,k} \\
         \sum\nolimits_{j=1}^m W_{ij}y_{j,k}^{(\ell)} +\nabla^{(\ell)} f_i(x_{i,k+1},\xi_{i,k+1}), & \text{if } \ell = \ell_{i,k+1}\neq \ell_{i,k}  \\
       \sum\nolimits_{j=1}^m W_{ij}y_{j,k}^{(\ell)} -\nabla^{(\ell)} f_i(x_{i,k},\xi_{i,k}) & \text{if } \ell =  \ell_{i,k} \neq \ell_{i,k+1} \\
        \sum\nolimits_{j=1}^m W_{ij}y_{j,k}^{(\ell)} , & \text{if }  \ell \neq  \ell_{i,k+1},  \ell \neq \ell_{i,k}.
        \end{array}\right.
\end{align*}
   \EndFor
   \end{algorithmic}
\end{algorithm*}

The outline of the proposed scheme is presented in Algorithm \ref{algorithm:RB-DSGT}. At each iteration, agents evaluate random blocks of their local stochastic gradients under the assumption below.
\begin{assumption}\label{assum:rand_block}
For $k\geq 0$ and $i \in [m]$, let $\ell_{i,k} \in [b]$ be generated from a discrete uniform distribution, i.e., $\textrm{Prob}(\ell_{i,k} = \ell) = b^{-1}$ for all $\ell \in [b]$. 
\end{assumption}
\begin{remark}
Note that in Assumption \ref{assum:rand_block}, the block $\ell_{i,k}$ for each agent $i$ is independently selected. Also, note that this block selection is independent from the random variables $\xi_i$.
\end{remark}
Algorithm \ref{algorithm:RB-DSGT} can compactly be written as
\begin{alignat}{3}\label{eqn:udpate_rules_compact}
&\mathbf{x_{k+1}} &&= \mathbf{W}(\mathbf{x_{k}} - \gamma_k\mathbf{y_{k}}), \notag\\
&\mathbf{y_{k+1}} &&= \mathbf{Wy_{k}} + b^{-1}\left( \mathbf{G}(\mathbf{x}_{k+1},\boldsymbol{\xi}_{k+1})- \mathbf{e}_{k+1}\right) \notag\\
& && - b^{-1}\left( \mathbf{G}(\mathbf{x}_{k},\boldsymbol{\xi}_{k})- \mathbf{e}_{k}\right),
\end{alignat}
where $\mathbf{W}$ is a doubly stochastic weight matrix and $\gamma_{k}$ is a nonincreasing strictly positive stepsize sequence.  Similar to~\cite{Pu_Nedic_20}, we consider the following assumptions. 
\begin{assumption}\label{assum:Stochastic_W}
The weight matrix $\mathbf{W}$ is doubly stochastic and we have $w_{i,i}>0$ for all $i \in [m]$. 
\end{assumption}
\begin{assumption}\label{assum:Graph}
Let the graph $\mathcal{G}$ corresponding to the communication network be undirected and connected.
\end{assumption}
Throughout, we show the history of the scheme for $k\geq 1$ as $
\mathscr{F}_k \triangleq \cup_{i=1}^m \{x_{i,0}, \ell_{i,0},\xi_{i,0},\ldots,\ell_{i,k-1},\xi_{i,{k-1}}\},$ with $\mathscr{F}_0 \triangleq \cup_{i=1}^m \{x_{i,0}\}$. 
We define the stochastic errors of the randomized block-coordinate scheme as follows.
\begin{align}\label{def:e_ik}
e_{i,k} &\triangleq \nabla f_i(x_{i,k},\xi_{i,k}) - b\mathbf{U}_{\ell_{i,k}}\nabla^{\ell_{i,k}} f_i(x_{i,k},\xi_{i,k}), \\
\mathbf{e}_{k}&\triangleq [e_{1,k},e_{2,k},\ldots,e_{m,k}]^T  \in \Real^{m \times n},\notag \\
\bar{e}_{k} &\triangleq \tfrac{1}{m}\mathbf{1}^T\mathbf{e}_k =\frm\txsum_{i=1}^me_{i,k}.\notag
\end{align} 
We show some key properties of the randomized errors.
\begin{lemma}\label{lem:random_block_error}
We have for all $i \in [m]$ and $k\geq 0$ 

\noindent (a) $\EXP{e_{i,k}\mid \mathscr{F}_k}= \EXP{\bar e_{k}\mid \mathscr{F}_k}=0$.

\noindent (b) $\EXP{\| e_{i,k}\|^2\mid \mathscr{F}_k} \leq (b-1)\left(\nu^2+\|\nabla f_i(x_{i,k})\|^2\right)$.

\noindent (c) $\EXP{\|\bar e_{k}\|^2\mid \mathscr{F}_k} \leq (b-1)\nu^2 +\tfrac{b-1}{m}\|\bG(\bx_k)\|^2$.

\end{lemma}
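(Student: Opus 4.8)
The plan is to establish the three parts in order, since (c) rests on the second-moment bound in (b), which in turn uses the unbiasedness from (a). Throughout I would condition on $\mathscr{F}_k$, under which $x_{i,k}$ is deterministic (it is computed from the history through step $k-1$), while $\xi_{i,k}$ and $\ell_{i,k}$ remain random and mutually independent by Assumption \ref{assum:stoch_errors} and the Remark following Assumption \ref{assum:rand_block}. The central object is the block decomposition $\nabla f_i(x_{i,k},\xi_{i,k})=\sum_{\ell=1}^b\mathbf{U}_\ell\nabla^\ell f_i(x_{i,k},\xi_{i,k})$, which I would use repeatedly together with the orthogonality relation $\sum_{\ell=1}^b\|\mathbf{U}_\ell x^{(\ell)}\|^2=\|x\|^2$ recorded in \eqref{eqn:prop3_U_ell}.

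For part (a), I would condition further on $\xi_{i,k}$, so that $\nabla f_i(x_{i,k},\xi_{i,k})$ is deterministic and only $\ell_{i,k}$ is random. Averaging $b\,\mathbf{U}_{\ell_{i,k}}\nabla^{\ell_{i,k}}f_i(x_{i,k},\xi_{i,k})$ over the uniform law $\textrm{Prob}(\ell_{i,k}=\ell)=b^{-1}$ gives $\sum_{\ell=1}^b\mathbf{U}_\ell\nabla^\ell f_i(x_{i,k},\xi_{i,k})=\nabla f_i(x_{i,k},\xi_{i,k})$ by the block decomposition, so that $\EXP{e_{i,k}\mid\mathscr{F}_k,\xi_{i,k}}=0$; the tower property then gives $\EXP{e_{i,k}\mid\mathscr{F}_k}=0$. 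Notably this step uses only the block randomization, not the gradient unbiasedness. Averaging over $i$ yields $\EXP{\bar e_k\mid\mathscr{F}_k}=0$.

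For part (b), the main computation, I would rewrite $e_{i,k}=\sum_{\ell\neq\ell_{i,k}}\mathbf{U}_\ell\nabla^\ell f_i(x_{i,k},\xi_{i,k})-(b-1)\mathbf{U}_{\ell_{i,k}}\nabla^{\ell_{i,k}}f_i(x_{i,k},\xi_{i,k})$. Since the ranges of the $\mathbf{U}_\ell$ are mutually orthogonal, the Pythagorean identity gives $\|e_{i,k}\|^2=\sum_{\ell\neq\ell_{i,k}}\|\nabla^\ell f_i(x_{i,k},\xi_{i,k})\|^2+(b-1)^2\|\nabla^{\ell_{i,k}}f_i(x_{i,k},\xi_{i,k})\|^2$. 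Averaging over $\ell_{i,k}$ (with $\xi_{i,k}$ fixed) and using $\sum_{\ell=1}^b\|\nabla^\ell f_i(x_{i,k},\xi_{i,k})\|^2=\|\nabla f_i(x_{i,k},\xi_{i,k})\|^2$, the $(b-1)$ and $(b-1)^2$ block weights combine into the clean factor $(b-1)\|\nabla f_i(x_{i,k},\xi_{i,k})\|^2$. Finally, a bias–variance split $\EXP{\|\nabla f_i(x_{i,k},\xi_{i,k})\|^2\mid\mathscr{F}_k}=\|\nabla f_i(x_{i,k})\|^2+\EXP{\|\nabla f_i(x_{i,k},\xi_{i,k})-\nabla f_i(x_{i,k})\|^2\mid\mathscr{F}_k}$, with the cross term vanishing by unbiasedness and the variance bounded by $\nu^2$ (Assumption \ref{assum:stoch_errors}), produces the stated bound.

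For part (c), I would invoke convexity of $\|\cdot\|^2$ to write $\|\bar e_k\|^2=\|\frm\sum_{i=1}^m e_{i,k}\|^2\le\frm\sum_{i=1}^m\|e_{i,k}\|^2$, take conditional expectation, apply part (b) termwise, and recognize $\sum_{i=1}^m\|\nabla f_i(x_{i,k})\|^2=\|\bG(\bx_k)\|^2$ (Frobenius norm), which gives exactly $(b-1)\nu^2+\tfrac{b-1}{m}\|\bG(\bx_k)\|^2$. I expect the only genuine obstacle to be the block-averaging algebra in (b), namely correctly tracking the $-(b-1)$ weight on the selected block against the $+1$ weight on the unselected blocks so that the two contributions collapse to the factor $(b-1)$; the remaining steps are routine conditioning. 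I would remark that exploiting the conditional independence of the $e_{i,k}$ across agents would kill the cross terms in (c) and yield a sharper bound with extra $1/m$ factors, but the simple Jensen argument already suffices for the stated inequality.
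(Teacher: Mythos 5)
Your proposal is correct and follows essentially the same route as the paper's proof: condition on the $\xi$'s and average over the uniform block selection to get unbiasedness in (a) and the identity $\EXP{\|e_{i,k}\|^2\mid \mathscr{F}_k\cup\{\xi_{i,k}\}}=(b-1)\|\nabla f_i(x_{i,k},\xi_{i,k})\|^2$ in (b), then apply Assumption \ref{assum:stoch_errors} via the bias--variance split, and finish (c) with Jensen's inequality and $\sum_{i=1}^m\|\nabla f_i(x_{i,k})\|^2=\|\bG(\bx_k)\|^2$. The only (cosmetic) difference is in (b), where you organize the algebra by splitting $e_{i,k}$ into its orthogonal block components and invoking Pythagoras, whereas the paper expands the square $\|\nabla f_i - b\mathbf{U}_{\ell_{i,k}}\nabla^{\ell_{i,k}}f_i\|^2$ directly and uses \eqref{eqn:prop3_U_ell} on the resulting terms; both yield the same intermediate identity, and your closing observation that conditional independence across agents would sharpen (c) by a factor $1/m$ is accurate but, as you note, not needed for the stated bound.
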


\begin{proof}
\noindent (a) We can write
\begin{align*}
&\EXP{e_{i,k}\mid \mathscr{F}_k\cup\{\xi_{i,k}\}} = \nabla f_i(x_{i,k},\xi_{i,k})\\
&  - b\EXP{ \mathbf{U}_{\ell_{i,k}}\nabla^{\ell_{i,k}} f_i(x_{i,k},\xi_{i,k})\mid \mathscr{F}_k\cup\{\xi_{i,k}\}}\\
&= \nabla f_i(x_{i,k},\xi_{i,k}) - b\txsum_{\ell=1}^b b^{-1} \mathbf{U}_{\ell}\nabla^{\ell} f_i(x_{i,k},\xi_{i,k}) =0.
\end{align*}
The desired result follows by taking expectations from the preceding relation with respect to $\xi_{i,k}$.

\noindent (b) Throughout the proof, we use the compact notation $ {\tilde\nabla}_{i,k} \triangleq{\nabla} f_i(x_{i,k},\xi_{i,k})$.
Taking conditional expectations, we have
\begin{align*}
&\mathbb{E}\left[\left\|e_{i,k}\right\|^2\mid \mathscr{F}_k\cup\left(\cup_{j=1}^m\{\xi_{j,k}\}\right)\right] \\
&= \left\|{\tilde\nabla}_{i,k}\right\|^2 + b\textstyle\sum\nolimits_{\ell=1}^b\left\|\mathbf{U}_{\ell}{\tilde\nabla}^{\ell} _{i,k}\right\|^2-2{\tilde\nabla}_{i,k}^T\textstyle\sum\nolimits_{\ell=1}^b\mathbf{U}_{\ell}{\tilde\nabla}^{\ell} _{i,k}.
\end{align*}
We have $\textstyle\sum_{\ell=1}^b\left\|\mathbf{U}_{\ell}{\tilde\nabla}^{\ell} _{i,k}\right\|^2  \stackrel{(\ref{eqn:prop3_U_ell})}{=}  \left\|{\tilde\nabla}_{i,k}  \right\|^2$. From the two preceding relations, we obtain
\begin{align*}
&\mathbb{E}\left[\left\|e_{i,k}\right\|^2\mid \mathscr{F}_k\cup\left(\cup_{j=1}^m\{\xi_{j,k}\}\right)\right] = (b-1)\left\|{\tilde\nabla}_{i,k}\right\|^2.
\end{align*}
The desired relation holds by taking expectations with respect to $\cup_{j=1}^m \{\xi_{j,k}\}$ from both sides and invoking Assumption \ref{assum:stoch_errors}.

\noindent (c) This relation follows from part (b) and by noting that we have $\|\bar e_k\|^2\leq \tfrac{1}{m}\txsum_{i=1}^m\|e_{i,k}\|^2$. 
\end{proof}
The following two lemmas will be applied in the analysis and can be found in \cite{Pu_Nedic_20}.
\begin{lemma}\label{lem:Spectral_Norms}
Let Assumption \ref{assum:Stochastic_W} and \ref{assum:Graph}, hold. Let $\rho_W $, denote the spectral norm of $\mathbf{W}-\tfrac{1}{m}\mathbf{1}\mathbf{1}^T$ and  $\bar u \triangleq \tfrac{1}{m}\mathbf{1}^T\mathbf{u}$. Then, $\rho_W<1 $, and
$\|\mathbf{W}\mathbf{u}-\mathbf{1}\bar u\| \leq \rho_W \|\mathbf{u}-\mathbf{1}\bar u\| $ for all $\mathbf{u} \in \mathbb{R}^{m \times n}$.
\end{lemma}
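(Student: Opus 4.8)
The plan is to split the statement into the algebraic inequality, which holds for any doubly stochastic $\mathbf{W}$, and the strict bound $\rho_W<1$, which is where Assumptions \ref{assum:Stochastic_W}--\ref{assum:Graph} really enter. Throughout I would abbreviate the averaging matrix by $\mathbf{J}\triangleq\tfrac1m\mathbf{1}\mathbf{1}^T$, so that $\mathbf{1}\bar u=\mathbf{J}\mathbf{u}$ and $\mathbf{W}-\tfrac1m\mathbf{1}\mathbf{1}^T=\mathbf{W}-\mathbf{J}$ by the definition of $\rho_W$.

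First I would record the three identities that follow purely from $\mathbf{W}$ being doubly stochastic, i.e. $\mathbf{W}\mathbf{1}=\mathbf{1}$ and $\mathbf{1}^T\mathbf{W}=\mathbf{1}^T$: namely $\mathbf{J}^2=\mathbf{J}$, $\mathbf{W}\mathbf{J}=\mathbf{J}$, and $\mathbf{J}\mathbf{W}=\mathbf{J}$. Using these, the key step is the algebraic identity
\begin{align*}
(\mathbf{W}-\mathbf{J})(\mathbf{u}-\mathbf{J}\mathbf{u}) &= \mathbf{W}\mathbf{u}-\mathbf{W}\mathbf{J}\mathbf{u}-\mathbf{J}\mathbf{u}+\mathbf{J}^2\mathbf{u}\\
&= \mathbf{W}\mathbf{u}-\mathbf{J}\mathbf{u} = \mathbf{W}\mathbf{u}-\mathbf{1}\bar u,
\end{align*}
so that $\mathbf{W}\mathbf{u}-\mathbf{1}\bar u=(\mathbf{W}-\mathbf{J})(\mathbf{u}-\mathbf{1}\bar u)$. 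The desired inequality then drops out by submultiplicativity of the Frobenius norm against the spectral norm, $\|\mathbf{A}\mathbf{B}\|\le \|\mathbf{A}\|_2\,\|\mathbf{B}\|$, applied with $\mathbf{A}=\mathbf{W}-\mathbf{J}$ (whose spectral norm is exactly $\rho_W$) and $\mathbf{B}=\mathbf{u}-\mathbf{1}\bar u$.

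The part that carries the real content is $\rho_W<1$. Since the graph is undirected, I would take $\mathbf{W}$ symmetric, so that $\mathbf{W}-\mathbf{J}$ is symmetric and its spectral norm equals the largest absolute eigenvalue. The eigenvalue $1$ of $\mathbf{W}$ has eigenvector $\mathbf{1}$, and because $\mathbf{J}$ projects onto $\mathrm{span}(\mathbf{1})$ while annihilating its orthogonal complement, the eigenvalues of $\mathbf{W}-\mathbf{J}$ are $0$ (on $\mathbf{1}$) together with the remaining eigenvalues $\lambda_2,\dots,\lambda_m$ of $\mathbf{W}$; hence $\rho_W=\max_{i\ge2}|\lambda_i|$. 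Connectivity of $\mathcal{G}$ (Assumption \ref{assum:Graph}) makes $\mathbf{W}$ irreducible, so the Perron eigenvalue $1$ is simple and $\lambda_2<1$; the strictly positive diagonal $w_{i,i}>0$ (Assumption \ref{assum:Stochastic_W}) makes $\mathbf{W}$ primitive/aperiodic, which rules out the eigenvalue $-1$ and forces $\lambda_m>-1$. Together these give $\max_{i\ge2}|\lambda_i|<1$, i.e. $\rho_W<1$.

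The only genuine obstacle is this last spectral step: everything else is a one-line identity plus a norm inequality. The care required is in invoking Perron--Frobenius correctly, separately using irreducibility (from connectivity) for simplicity of the top eigenvalue and primitivity (from the positive self-weights) to exclude eigenvalues of modulus one, rather than relying on a specific gossip-weight construction. As the excerpt notes, this is the standard mixing estimate for doubly stochastic matrices and is exactly the result borrowed from \cite{Pu_Nedic_20}.
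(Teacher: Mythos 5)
The first thing to note is that the paper contains no proof of Lemma \ref{lem:Spectral_Norms} to compare against: it is imported from \cite{Pu_Nedic_20} (``The following two lemmas will be applied in the analysis and can be found in [Pu\_Nedic\_20]''). What you have written is, in effect, a reconstruction of the standard argument from that source, and its two main components are sound: the identities $\mathbf{W}\mathbf{J}=\mathbf{J}\mathbf{W}=\mathbf{J}=\mathbf{J}^2$ give $\mathbf{W}\mathbf{u}-\mathbf{1}\bar u=(\mathbf{W}-\mathbf{J})(\mathbf{u}-\mathbf{1}\bar u)$, and the mixed submultiplicativity bound $\|\mathbf{A}\mathbf{B}\|\leq \|\mathbf{A}\|_2\|\mathbf{B}\|$ (spectral norm times Frobenius norm) then yields the contraction inequality exactly as claimed. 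So you have supplied the content that the paper deliberately outsources, which is a legitimate and complete way to discharge the lemma.

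One step does need repair if you want the proof to hold under the paper's literal assumptions. Assumption \ref{assum:Stochastic_W} grants only that $\mathbf{W}$ is doubly stochastic with $w_{ii}>0$; symmetry is \emph{not} assumed, and an undirected communication graph does not force it. Your identification of the spectral norm of $\mathbf{W}-\mathbf{J}$ with its largest absolute eigenvalue is valid only for symmetric (more generally, normal) matrices, so as written the $\rho_W<1$ part silently strengthens the hypotheses. The fix is short: compute $(\mathbf{W}-\mathbf{J})^T(\mathbf{W}-\mathbf{J})=\mathbf{W}^T\mathbf{W}-\mathbf{J}$, and apply your Perron--Frobenius argument to $\mathbf{W}^T\mathbf{W}$, which is symmetric, positive semidefinite, doubly stochastic, has positive diagonal (since $(\mathbf{W}^T\mathbf{W})_{ii}\geq w_{ii}^2>0$), and has support containing that of $\mathbf{W}$ (since $(\mathbf{W}^T\mathbf{W})_{ij}\geq w_{ii}w_{ij}$); irreducibility plus aperiodicity then give $\rho_W^2=\lambda_{\max}\bigl(\mathbf{W}^T\mathbf{W}-\mathbf{J}\bigr)<1$. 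Alternatively, state explicitly that you take $\mathbf{W}$ symmetric, which is the usual convention for undirected graphs. A smaller point of the same kind: irreducibility requires that the positive entries of $\mathbf{W}$ actually cover the edges of the connected graph $\mathcal{G}$; this link between Assumptions \ref{assum:Stochastic_W} and \ref{assum:Graph} is only implicit in the paper, and your proof should say it is being used.
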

\begin{lemma}\label{lem:bar_x_and_x_star}
Let Assumption \ref{assum:problem} hold. For any $\alpha \leq \tfrac{2}{\mu + L}$, we have $\|\bar x_k -\alpha \mathscr G(\bar x_k) -x^*\| \leq (1-\mu\alpha)\|\bar x_k- x^*\|$.
\end{lemma}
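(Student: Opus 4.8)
The plan is to recognize $\mathscr{G}$ as the gradient of the averaged objective and reduce the claim to the classical contraction estimate for one gradient step on a strongly convex, smooth function. First I would introduce $h(x) \triangleq \tfrac{1}{m} f(x) = \tfrac{1}{m}\sum_{i=1}^m f_i(x)$, so that $\mathscr{G}(x) = \tfrac{1}{m}\nabla f(x) = \nabla h(x)$ by definition of $\mathscr{G}$. Since every $f_i$ is $\mu$-strongly convex and $L$-smooth (Assumption \ref{assum:problem}), the average $h$ is again $\mu$-strongly convex and $L$-smooth. As $x^*$ minimizes $f$, and hence $h$, the first-order optimality condition gives $\mathscr{G}(x^*) = \nabla h(x^*) = 0$, so that $\bar x_k - \alpha\mathscr{G}(\bar x_k) - x^* = (\bar x_k - x^*) - \alpha\big(\nabla h(\bar x_k) - \nabla h(x^*)\big)$.

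Next I would expand the squared Euclidean norm:
\[
\|\bar x_k - \alpha\mathscr{G}(\bar x_k) - x^*\|^2 = \|\bar x_k - x^*\|^2 - 2\alpha\langle \nabla h(\bar x_k)-\nabla h(x^*),\, \bar x_k - x^*\rangle + \alpha^2\|\nabla h(\bar x_k)-\nabla h(x^*)\|^2 .
\]
The two ingredients I would then invoke are (i) the co-coercivity inequality valid for $\mu$-strongly convex, $L$-smooth functions,
\[
\langle \nabla h(u)-\nabla h(v),\, u-v\rangle \geq \tfrac{\mu L}{\mu+L}\|u-v\|^2 + \tfrac{1}{\mu+L}\|\nabla h(u)-\nabla h(v)\|^2,
\]
and (ii) the strong-convexity lower bound $\|\nabla h(\bar x_k)-\nabla h(x^*)\| \geq \mu\|\bar x_k - x^*\|$. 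Applying (i) to the cross term yields
\[
\|\bar x_k - \alpha\mathscr{G}(\bar x_k) - x^*\|^2 \leq \Big(1-\tfrac{2\alpha\mu L}{\mu+L}\Big)\|\bar x_k - x^*\|^2 + \Big(\alpha^2 - \tfrac{2\alpha}{\mu+L}\Big)\|\nabla h(\bar x_k)-\nabla h(x^*)\|^2 .
\]

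The step I expect to be the main obstacle is treating the last term so that the constant comes out tight. The coefficient $\alpha^2 - \tfrac{2\alpha}{\mu+L}$ is nonpositive precisely because $\alpha \leq \tfrac{2}{\mu+L}$, but simply discarding this term gives only the weaker factor $\sqrt{1-\tfrac{2\alpha\mu L}{\mu+L}}$, which strictly exceeds $1-\mu\alpha$ whenever $\alpha < \tfrac{2}{\mu+L}$. Instead, since the coefficient is nonpositive, I would substitute the lower bound (ii) into it---multiplying a nonpositive number by a larger quantity yields a smaller product---to obtain
\[
\|\bar x_k - \alpha\mathscr{G}(\bar x_k) - x^*\|^2 \leq \Big[1-\tfrac{2\alpha\mu L}{\mu+L} + \Big(\alpha^2-\tfrac{2\alpha}{\mu+L}\Big)\mu^2\Big]\|\bar x_k - x^*\|^2 .
\]
A direct simplification collapses the bracket to exactly $(1-\mu\alpha)^2$. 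Taking square roots and noting that $\alpha \leq \tfrac{2}{\mu+L}\leq \tfrac{1}{\mu}$ forces $1-\mu\alpha\geq 0$ then delivers the claimed inequality. The only delicate point is the sign bookkeeping when applying (ii) against the nonpositive coefficient; this is exactly what produces the tight factor $1-\mu\alpha$ rather than a looser contraction constant.
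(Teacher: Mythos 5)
Your proof is correct. Note that the paper does not prove this lemma at all---it imports it directly from the cited DSGT reference \cite{Pu_Nedic_20}---and your argument (expanding the square, applying co-coercivity of $\nabla h$ for the $\mu$-strongly convex, $L$-smooth averaged function $h=\tfrac{1}{m}f$, then substituting the strong-convexity bound $\|\nabla h(\bar x_k)-\nabla h(x^*)\|\geq\mu\|\bar x_k-x^*\|$ into the nonpositive coefficient to collapse the bound to $(1-\mu\alpha)^2$) is precisely the standard proof given in that cited literature, so this is essentially the same approach.
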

We also make use of the following result later in the analysis.
\begin{lemma}\label{lem:Norms}
Let $\mathbf{u}, \mathbf{v} \in \mathbb{R}^{m \times n}$. Then, the following holds.

\noindent (a) $\langle \mathbf{u}, \mathbf{v} \rangle  = \sum_{i=1}^m u_{i\bullet}v_{i\bullet}^T=\sum_{j=1}^n u_{\bullet j }^Tv_{\bullet j }. $ 

\noindent (b) $\|\mathbf{u}+ \mathbf{v}\|^2 = \|\mathbf{u} \|^2+2\langle \mathbf{u}, \mathbf{v} \rangle+ \| \mathbf{v}\|^2,$

where $\|\bullet\|$ denotes the Frobenius norm of a matrix. 

\noindent (c) $|\langle \mathbf{u}, \mathbf{v} \rangle| \leq \|\mathbf{u}\|\|\mathbf{v}\| \leq \tfrac{1}{2}\left(\lambda\|\mathbf{u}\|^2+\tfrac{1}{\lambda}\|\mathbf{v}\|^2\right)$ for any $\lambda>0$.
\end{lemma}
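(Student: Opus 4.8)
The plan is to treat all three parts as elementary consequences of identifying $\mathbb{R}^{m\times n}$ with the Euclidean space $\mathbb{R}^{mn}$, under which the Frobenius inner product is $\langle \mathbf{u}, \mathbf{v}\rangle = \sum_{i=1}^m\sum_{j=1}^n u_{ij}v_{ij}$ and the Frobenius norm satisfies $\|\mathbf{u}\|^2 = \langle \mathbf{u},\mathbf{u}\rangle$. All the standard properties of a real inner product, namely bilinearity, symmetry, and positive definiteness, then hold and will be invoked freely.

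For part (a), I would start from the entrywise definition of the inner product and regroup the double sum in the two possible orders. Summing over the column index $j$ first, for each fixed row $i$ the inner sum $\sum_{j=1}^n u_{ij}v_{ij}$ is precisely the product $u_{i\bullet}v_{i\bullet}^T$ of the $i$-th rows, which yields the first identity $\langle \mathbf{u}, \mathbf{v}\rangle = \sum_{i=1}^m u_{i\bullet}v_{i\bullet}^T$. Summing over the row index $i$ first gives, for each fixed column $j$, the product $u_{\bullet j}^T v_{\bullet j}$ of the $j$-th columns, producing the second identity. The only thing requiring care here is the placement of the transpose, which is dictated by whether rows or columns are being contracted.

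Part (b) I would obtain directly by expanding $\langle \mathbf{u}+\mathbf{v}, \mathbf{u}+\mathbf{v}\rangle$ via bilinearity and then invoking the symmetry $\langle \mathbf{u},\mathbf{v}\rangle = \langle \mathbf{v},\mathbf{u}\rangle$ to collapse the two cross terms into $2\langle \mathbf{u},\mathbf{v}\rangle$. For part (c), the first inequality is the Cauchy--Schwarz inequality for this inner product, which is immediate once $\mathbb{R}^{m\times n}$ is viewed as $\mathbb{R}^{mn}$. The second inequality is Young's inequality: writing $\|\mathbf{u}\|\,\|\mathbf{v}\| = (\sqrt{\lambda}\,\|\mathbf{u}\|)(\tfrac{1}{\sqrt{\lambda}}\,\|\mathbf{v}\|)$ and applying $ab\le\tfrac{1}{2}(a^2+b^2)$, equivalently expanding the nonnegative quantity $(\sqrt{\lambda}\,\|\mathbf{u}\|-\tfrac{1}{\sqrt{\lambda}}\,\|\mathbf{v}\|)^2\ge 0$, gives the stated bound for every $\lambda>0$.

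There is no genuine obstacle in this lemma; it is a collection of textbook facts recorded for later reference in the convergence analysis. The only points requiring minor care are the transpose conventions in part (a) and carrying the free parameter $\lambda$ correctly through Young's inequality in part (c).
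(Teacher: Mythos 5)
Your proof is correct, and in fact the paper offers no proof of this lemma at all---it is recorded as a collection of standard facts for later use, exactly as you observe. Your argument (entrywise expansion with the two orders of summation for (a), bilinearity and symmetry for (b), Cauchy--Schwarz followed by Young's inequality with the parameter $\lambda$ for (c)) is precisely the standard justification the paper implicitly relies on, so there is nothing to correct or compare.
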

\section{Convergence and Rate Analysis}\label{sec:conv_anal}
Here, we present some properties of the gradient maps.
\begin{lemma}\label{lem:prelim_preperties}
Consider Algorithm \ref{algorithm:RB-DSGT}. Let Assumptions \ref{assum:problem}, \ref{assum:stoch_errors}, and \ref{assum:rand_block} hold. Then, for all $k\geq 0$, the following results hold.

\noindent (a) $b\bar y_k = G(\mathbf{x}_k,\boldsymbol{\xi}_k)-\bar{e}_{k}$. \quad  (b) $\EXP{b\bar y_k \mid \mathscr{F}_k}= G(\mathbf{x}_k)$.

\noindent (c) $\EXP{\|b\bar y_k - G(\mathbf{x}_k)\|^2 \mid \mathscr{F}_k} \leq  \left(\tfrac{1}{m}+b-1\right)\nu^2+\tfrac{b-1}{m}\|\bG(\bx_k)\|^2$.

\noindent (d) For any $\mathbf{u}, \mathbf{v} \in \Real{^{m\times n}}$, $\left\|G (\mathbf{u}) - G (\mathbf{v})\right\| \leq\tfrac{L}{\sqrt{m}}\|\mathbf{u}-\mathbf{v}\| $.

\noindent (e) $\left\|G(\mathbf{x}_k)-\sg(\bar{x}_k)\right\|\leq \tfrac{L}{\sqrt{m}}\|\mathbf{x}_k-\mathbf{1}\bar x_k\| $.

\noindent (f) $\left\|\sg(\bar{x}_k)\right\|\leq L\|\bar{x}_k-x^*\|$.

\noindent (g) $\|\mathbf{G}(\bx_k)\|^2 \leq   2L^2\|\mathbf{x}_k-\mathbf{1}\bar x_k\|^2 +2mL^2\|\bar x_k-x^*\|^2$.
\end{lemma}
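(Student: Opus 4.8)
The plan is to reduce the Frobenius-norm quantity $\|\mathbf{G}(\mathbf{x}_k)\|^2$ to a sum of per-agent gradient norms and then control each summand by a distance-to-optimum term, mirroring the consensus-plus-optimality split already used for the averaged maps in parts (e) and (f) but carried out agent by agent. Since $\mathbf{G}(\mathbf{x}_k) = [\nabla f_1(x_{1,k}),\ldots,\nabla f_m(x_{m,k})]^T$, the definition of the Frobenius norm gives $\|\mathbf{G}(\mathbf{x}_k)\|^2 = \sum_{i=1}^m \|\nabla f_i(x_{i,k})\|^2$, so the whole statement reduces to bounding each $\|\nabla f_i(x_{i,k})\|^2$.

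The key estimate I would use is $L$-smoothness anchored at $x^*$ together with the per-agent stationarity $\nabla f_i(x^*)=0$, which yields $\|\nabla f_i(x_{i,k})\| = \|\nabla f_i(x_{i,k}) - \nabla f_i(x^*)\| \le L\|x_{i,k}-x^*\|$, hence $\|\nabla f_i(x_{i,k})\|^2 \le L^2\|x_{i,k}-x^*\|^2$. I would then split the displacement into its consensus and optimality parts, $x_{i,k}-x^* = (x_{i,k}-\bar{x}_k) + (\bar{x}_k - x^*)$, and apply the elementary bound $\|a+b\|^2 \le 2\|a\|^2 + 2\|b\|^2$ to get $\|\nabla f_i(x_{i,k})\|^2 \le 2L^2\|x_{i,k}-\bar{x}_k\|^2 + 2L^2\|\bar{x}_k - x^*\|^2$.

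Summing over $i \in [m]$ closes the argument: $\sum_{i=1}^m \|x_{i,k}-\bar{x}_k\|^2 = \|\mathbf{x}_k - \mathbf{1}\bar{x}_k\|^2$ by the definitions of the Frobenius norm and of $\bar{x}_k$, while the second term, being independent of $i$, contributes $2mL^2\|\bar{x}_k - x^*\|^2$. This produces exactly $\|\mathbf{G}(\mathbf{x}_k)\|^2 \le 2L^2\|\mathbf{x}_k - \mathbf{1}\bar{x}_k\|^2 + 2mL^2\|\bar{x}_k - x^*\|^2$. An equivalent route adds and subtracts $\nabla f_i(\bar{x}_k)$ rather than $x^*$: smoothness gives $\|\nabla f_i(x_{i,k}) - \nabla f_i(\bar{x}_k)\| \le L\|x_{i,k}-\bar{x}_k\|$, and stationarity gives $\|\nabla f_i(\bar{x}_k)\| \le L\|\bar{x}_k - x^*\|$; after $\|a+b\|^2\le 2\|a\|^2+2\|b\|^2$ both routes yield the same constants.

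The only delicate point, and the step I would want to justify carefully, is the use of $\nabla f_i(x^*)=0$ for each individual $i$. Optimality of $x^*$ for $\sum_i f_i$ guarantees only $\sum_i \nabla f_i(x^*)=0$; without per-agent stationarity the decomposition leaves an additive residual $2\sum_i\|\nabla f_i(x^*)\|^2$ that cannot be folded into either stated term. Thus the clean, constant-free bound implicitly presumes $x^*$ is a common minimizer of all the $f_i$ (as in interpolation or homogeneous-data regimes). This is where I expect the real content to sit, rather than in any routine norm manipulation, so it is the assumption I would want to confirm before declaring the proof complete.
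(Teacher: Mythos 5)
Your proposal proves only part (g) of this seven-part lemma. Parts (a)--(f) are untouched: in particular (a)--(c), which carry the lemma's real probabilistic content (an induction on the $\mathbf{y}$-update in \eqref{eqn:udpate_rules_compact} to establish $b\bar y_k = G(\bx_k,\boldsymbol{\xi}_k)-\bar e_k$, followed by the randomized-block error bounds of Lemma \ref{lem:random_block_error} and Assumption \ref{assum:stoch_errors} to get (b) and (c)), cannot be waved through, so as a proof of the stated lemma the proposal is incomplete. On part (g) itself your argument coincides with the paper's: the paper performs the same split of $\bG(\bx_k)$ through $\bG(\mathbf{1}\bar x_k)$, applies $L$-smoothness row by row and $\|a+b\|^2\le 2\|a\|^2+2\|b\|^2$, and obtains the same constants; you merely write it agent-wise instead of in Frobenius form.

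The ``delicate point'' you flag is, however, exactly right --- and it is a flaw in the paper's own proof, not just a caveat on yours. The paper's display for (g) replaces $\|\bG(\mathbf{1}\bar x_k)\|$ by $\|\bG(\mathbf{1}\bar x_k)-\bG(\mathbf{1}x^*)\|$, which is legitimate only if $\bG(\mathbf{1}x^*)=\mathbf{0}$, i.e., $\nabla f_i(x^*)=0$ for every agent $i$. Assumption \ref{assum:problem} yields only $\sum_{i=1}^m \nabla f_i(x^*)=0$; that weaker fact is what legitimately powers part (f), where the averaged map satisfies $\sg(x^*)=\tfrac{1}{m}\nabla f(x^*)=0$, but it does not make the stacked matrix $\bG(\mathbf{1}x^*)$ vanish. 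Concretely, take $m=2$, $n=1$, $f_1(x)=\tfrac{1}{2}(x-1)^2$, $f_2(x)=\tfrac{1}{2}(x+1)^2$, so $\mu=L=1$ and $x^*=0$; at $\bx_k=\mathbf{1}x^*$ the left side of (g) equals $2$ while the right side is $0$. Hence (g) is false as stated for heterogeneous agents, and the fix is the one you identify: either assume a common minimizer, or carry the heterogeneity residual, e.g.
\begin{align*}
\|\bG(\bx_k)\|^2 \le 2L^2\|\bx_k-\mathbf{1}\bar x_k\|^2 + 4mL^2\|\bar x_k - x^*\|^2 + 4\txsum_{i=1}^m\|\nabla f_i(x^*)\|^2,
\end{align*}
whose additive constant then propagates through Proposition \ref{prop:recursions}(a) and (c) (enlarging $\theta_3$ and $\theta_9$) without changing the $1/k$ and $1/k^2$ orders in Theorem \ref{thm:rate}.
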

\begin{proof}
\noindent(a) 
We use induction on $k$. For $k=0$ we have
\begin{align*}
b\bar{y}_{0} &= \tfrac{b}{m}\textstyle\sum\nolimits_{i=1}^m b^{-1}\left(\nabla f_i(x_{i,0},\xi_{i,0})-e_{i,0}\right)\\
&= {G}(\mathbf{x}_{0},{\bxi}_{0})-\bar{e}_{0}.
\end{align*}
Now suppose (a) holds for some $k$.  We have
\begin{align*}
b\bar{y}_{k+1} 
& = \tfrac{b}{m}\mathbf{1}^T\mathbf{y}_{k} + \tfrac{1}{m}\mathbf{1}^T\left(\mathbf{G}(\mathbf{x}_{k+1},{\bxi}_{k+1}) - \mathbf{e}_{k+1}\right) \\
&-  \tfrac{1}{m}\mathbf{1}^T\left(\mathbf{G}(\mathbf{x}_{k},{\bxi}_{k}) - \mathbf{e}_{k}\right)\\
& =b \bar{\mathbf{y}}_{k} +\tfrac{1}{m}\mathbf{1}^T\left(\mathbf{G}(\mathbf{x}_{k+1},{\bxi}_{k+1}) - \mathbf{e}_{k+1}\right) \\
& =\tfrac{1}{m}\mathbf{1}^T\left(\mathbf{G}(\mathbf{x}_{k+1},{\bxi}_{k+1}) - \mathbf{e}_{k+1}\right).
\end{align*}
Therefore, the induction hypothesis statement holds for $k+1$ and hence, the desired relation holds for all $k\geq 0$.

\noindent (b) Taking conditional expectations from the equation in part (a)   and utilizing Lemma \ref{lem:random_block_error}(a), we have
\begin{align*}
\EXP{b\bar y_k \mid \mathscr{F}_k} &= \EXP{G(\mathbf{x}_k,\boldsymbol{\xi}_k)-\bar{e}_{k}\mid \mathscr{F}_k} =G(\bx_k) -\EXP{\bar e_k \mid \sF_k }  \\ 
& = G(\bx_k)-\frm\txsum_{i=1}^m\EXP{e_{i,k} \mid \sF_k }=G(\bx_k).
\end{align*}

\noindent (c) From part (a), we have
\begin{align*}
&\EXP{\|b\bar y_k - G(\mathbf{x}_k)\|^2 \mid \mathscr{F}_k} = \mathbb{E}[\|G(\mathbf{x}_k,\boldsymbol{\xi}_k)-\bar{e}_{k} - G(\mathbf{x}_k)\|^2  \\
&  \mid \mathscr{F}_k] \leq  \tfrac{\nu^2}{m}+(b-1)\nu^2+\tfrac{b-1}{m}\|\bG(\bx_k)\|^2,
\end{align*}
where the last relation is obtained from Lemmas \ref{lem:random_block_error} and  \ref{assum:stoch_errors}.

\noindent (d) For any $\mathbf{u}, \mathbf{v} \in \Real{^{m\times n}}$, with $u_i,v_i \in \Real{^n}$ denoting the $i^{th}$ row of $\mathbf{u},\mathbf{v}$, respectively, we have
\begin{align*}
&\left\|G (\mathbf{u}) - G (\mathbf{v})\right\| = \left\|\tfrac{1}{m}\mathbf{1}^T\nabla \mathbf{f}(\mathbf{u}) - \tfrac{1}{m}\mathbf{1}^T \nabla \mathbf{f}\left(\mathbf{v}\right)\right\|\\
&  = \tfrac{1}{m}\left\|\textstyle\sum_{i=1}^m\nabla f_i(u_i)-\textstyle\sum_{i=1}^m \nabla f_i\left(v_i\right)\right\|
\leq\tfrac{L}{\sqrt{m}}\|\mathbf{u}-\mathbf{v}\|.
\end{align*}

\noindent (e)  We have 
$\left\|G(\mathbf{x}_k)-\sg(\bar{x}_k)\right\| = \left\|G(\mathbf{x}_k)- G(\mathbf{1}\bar{x}_k)\right\|
\leq \tfrac{L}{\sqrt{m}}\|\mathbf{x}_k-\mathbf{1}\bar x_k\|.$

\noindent (f) Invoking $ G(\mathbf{1}{x}^*) = \mathbf{0}$, we have
$\left\|\sg(\bar{x}_k)\right\| = \left\|G(\mathbf{1}\bar{x}_k)\right\|
= \left\|G(\mathbf{1}\bar{x}_k) - G(\mathbf{1}{x}^*)\right\| \leq L\|\bar{x}_k-x^*\|.$

\noindent (g)  We can write
\begin{align*}
&\|\mathbf{G}(\bx_k)\|^2 \leq 
 2L^2\|\mathbf{x}_k-\mathbf{1}\bar x_k\|^2 + 2\|\mathbf{G}(\mathbf{1}\bar x_k)-\mathbf{G}(\mathbf{1}  x^*)\|^2\\
& \leq 2L^2\|\mathbf{x}_k-\mathbf{1}\bar x_k\|^2 +2mL^2\|\bar x_k- x^*\|^2.
\end{align*}
\end{proof}
In the following, we derive three recursive error bounds that will be used later to derive the convergence rate statements.  
\begin{proposition}[Recursive error bounds]\label{prop:recursions} Consider Algorithm \ref{algorithm:RB-DSGT}.  Let Assumptions \ref{assum:problem}, \ref{assum:stoch_errors}, \ref{assum:rand_block}, \ref{assum:Stochastic_W}, and \ref{assum:Graph} hold.  Then, if $\gamma_k \leq \min\left\{ \tfrac{2b}{\mu + L}, \tfrac{b\mu}{4(b-1)L^2}\right\}$, for any $\eta>0$ we have 
\begingroup
\allowdisplaybreaks
\begin{align*}
(a)\ &\mathbb{E}\left[\|\bar x_{k+1}-x^*\|^2\right]\leq   (1-\tfrac{\mu b^{-1}\gamma_k}{2})\mathbb{E}\left[\|\bar x_k - x^*\|^2\right]\\
&+  \tfrac{b^{-1}\gamma_kL^2}{ m}\left(\tfrac{1}{\mu}+ b^{-1}(2b-1)\gamma_k\right)\mathbb{E}\left[\|\mathbf{x}_k-\mathbf{1} \bar x_k\|^2\right]\\
&+b^{-2}\gamma_k^2\left(\tfrac{1}{m}+b-1\right)\nu^2. \\
(b)\  &\mathbb{E}\left[\|\mathbf{x}_{k+1}-\mathbf{1} \bar x_{k+1}\|^2\right]\leq  \tfrac{1+\rho_W^2}{2} \mathbb{E}\left[ \|\mathbf{x}_k-\mathbf{1}{\bar x_k\|^2}\right]\\
&+\tfrac{\gamma_k^2(1+\rho_W^2)\rho_W^2}{1-\rho_W^2} \mathbb{E}\left[\|\mathbf{y}_k-\mathbf{1} \bar y_k	\|^2 \right]. \\
(c)\  &\mathbb{E}\left[ \| \mathbf{y}_{k+1}-\mathbf{1}\bar y_{k+1}\|^2\right]\leq \left( (1+b^{-1}\eta)\rho_W^2+\gamma_k^2(\tfrac{1}{b^2}+\tfrac{1}{b\eta})\right.\\
&\left.\times\left(2L^2\rho_W^2+\tfrac{2(b-1)L^2(1+\rho_W^2)\rho_W^2}{1-\rho_W^2}\right)\right)\mathbb{E}\left[\|\mathbf{y}_k-\mathbf{1} \bar y_k	\|^2 \right]\notag\\
&  +2L^2m(\tfrac{1}{b^2}+\tfrac{1}{b\eta})\left(b^{-2} L^2\gamma_k^2 \right.\notag\\
&\left.+(b-1)\left(3+L^2\gamma_k^2b^{-2} \right)\right)\mathbb{E}\left[\|\bar x_{k}-x^*\|^2 \right] \notag\\
&+2L^2\left(b^{-2}L^2\gamma_k^2+(b-1)(3+L^2\gamma_k^2b^{-2}  \right.\\
&\left.+b^{-1}\gamma_kL^2(\tfrac{1}{\mu}+b^{-1}(2b-1)\gamma_k ))\right.\\
& \left.+\|\mathbf{W}-\mathbf{I}\|^2\right)(\tfrac{1}{b^2}+\tfrac{1}{b\eta})\mathbb{E}\left[\|\mathbf{x}_k-\mathbf{1}\bar x_k\|^2\right]\notag\\
& +(\tfrac{1}{b^2}+\tfrac{1}{b\eta})\nu^2 \left(m L^2b^{-2}(\tfrac{1}{m}+b-1)\gamma_k^2+3mb\right).
\end{align*}
\endgroup
\end{proposition}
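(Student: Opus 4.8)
The three bounds track the three natural error quantities of a gradient-tracking scheme---the mean optimality gap $\|\bar x_k-x^*\|^2$, the consensus error $\|\mathbf{x}_k-\mathbf{1}\bar x_k\|^2$, and the tracking error $\|\mathbf{y}_k-\mathbf{1}\bar y_k\|^2$---and my plan is to prove (a) and (b) first and then reuse them inside (c). I write $\alpha\triangleq b^{-1}\gamma_k$ and abbreviate $\mathbf{D}_k\triangleq\mathbf{G}(\mathbf{x}_k,\boldsymbol{\xi}_k)-\mathbf{e}_k$, so the $\mathbf{y}$-update reads $\mathbf{y}_{k+1}=\mathbf{W}\mathbf{y}_k+b^{-1}(\mathbf{D}_{k+1}-\mathbf{D}_k)$ and Lemma~\ref{lem:prelim_preperties}(a) gives $b\bar y_k=\bar D_k$. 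For (a), averaging the $\mathbf{x}$-update and using double stochasticity yields $\bar x_{k+1}=\bar x_k-\alpha(G(\mathbf{x}_k,\boldsymbol{\xi}_k)-\bar e_k)$. Since $\mathbb{E}[b\bar y_k\mid\mathscr{F}_k]=G(\mathbf{x}_k)$ by Lemma~\ref{lem:prelim_preperties}(b), a conditional bias--variance split gives $\mathbb{E}[\|\bar x_{k+1}-x^*\|^2\mid\mathscr{F}_k]=\|\bar x_k-\alpha G(\mathbf{x}_k)-x^*\|^2+\alpha^2\mathbb{E}[\|b\bar y_k-G(\mathbf{x}_k)\|^2\mid\mathscr{F}_k]$. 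The first term I rewrite as $\|(\bar x_k-\alpha\mathscr{G}(\bar x_k)-x^*)-\alpha(G(\mathbf{x}_k)-\mathscr{G}(\bar x_k))\|^2$ and bound via Lemma~\ref{lem:bar_x_and_x_star} (which needs $\gamma_k\le 2b/(\mu+L)$), Young's inequality from Lemma~\ref{lem:Norms}(c), and Lemma~\ref{lem:prelim_preperties}(e); the second term I bound by Lemma~\ref{lem:prelim_preperties}(c) and then Lemma~\ref{lem:prelim_preperties}(g). Collecting, the coefficient of $\|\bar x_k-x^*\|^2$ is $1-\mu\alpha+2\alpha^2(b-1)L^2$, and the restriction $\gamma_k\le b\mu/(4(b-1)L^2)$ is exactly what reduces it to $1-\tfrac{\mu\alpha}{2}$; the rest assembles into the stated $\|\mathbf{x}_k-\mathbf{1}\bar x_k\|^2$ and $\nu^2$ coefficients.

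For (b) I subtract the averaged recursion to get $\mathbf{x}_{k+1}-\mathbf{1}\bar x_{k+1}=(\mathbf{W}\mathbf{x}_k-\mathbf{1}\bar x_k)-\gamma_k(\mathbf{W}\mathbf{y}_k-\mathbf{1}\bar y_k)$, expand the square, apply Lemma~\ref{lem:Spectral_Norms} to each squared term and Cauchy--Schwarz to the cross term, and then apply Young's inequality with the parameter $\beta=(1-\rho_W^2)/(2\gamma_k\rho_W^2)$, which is chosen so that the coefficient of $\|\mathbf{x}_k-\mathbf{1}\bar x_k\|^2$ collapses to $\tfrac{1+\rho_W^2}{2}$; the matching $\|\mathbf{y}_k-\mathbf{1}\bar y_k\|^2$ coefficient then telescopes to $\gamma_k^2\rho_W^2(1+\rho_W^2)/(1-\rho_W^2)$.

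For (c) I first write $\mathbf{y}_{k+1}-\mathbf{1}\bar y_{k+1}=(\mathbf{W}\mathbf{y}_k-\mathbf{1}\bar y_k)+b^{-1}(\tilde{\mathbf{D}}_{k+1}-\tilde{\mathbf{D}}_k)$ with $\tilde{\mathbf{D}}_k=(\mathbf{I}-\tfrac1m\mathbf{1}\mathbf{1}^T)\mathbf{D}_k$, apply Young's inequality with parameter $\eta$ and Lemma~\ref{lem:Spectral_Norms}, which produces $(1+b^{-1}\eta)\rho_W^2\|\mathbf{y}_k-\mathbf{1}\bar y_k\|^2$ and, via non-expansiveness of the centering projection, the factor $(\tfrac1{b^2}+\tfrac1{b\eta})\mathbb{E}\|\mathbf{D}_{k+1}-\mathbf{D}_k\|^2$. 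The heart of the argument is bounding $\mathbb{E}\|\mathbf{D}_{k+1}-\mathbf{D}_k\|^2$: I split $\mathbf{D}_{k+1}-\mathbf{D}_k=A+B+C$ with $A=\mathbf{D}_{k+1}-\mathbf{G}(\mathbf{x}_{k+1})$, $B=\mathbf{G}(\mathbf{x}_{k+1})-\mathbf{G}(\mathbf{x}_k)$, $C=\mathbf{G}(\mathbf{x}_k)-\mathbf{D}_k$, and condition first on $\mathscr{F}_{k+1}$ so the zero-mean $A$ separates with $\mathbb{E}[\|A\|^2\mid\mathscr{F}_{k+1}]\le(b-1)\|\mathbf{G}(\mathbf{x}_{k+1})\|^2+bm\nu^2$ (a direct second-moment computation from the block sampling and Assumption~\ref{assum:stoch_errors}), then on $\mathscr{F}_k$ for $\mathbb{E}[\|C\|^2\mid\mathscr{F}_k]\le(b-1)\|\mathbf{G}(\mathbf{x}_k)\|^2+bm\nu^2$. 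For $B$ I use $\|\mathbf{G}(\mathbf{x}_{k+1})-\mathbf{G}(\mathbf{x}_k)\|\le L\|\mathbf{x}_{k+1}-\mathbf{x}_k\|$ with the displacement bound from $\mathbf{x}_{k+1}-\mathbf{x}_k=(\mathbf{W}-\mathbf{I})(\mathbf{x}_k-\mathbf{1}\bar x_k)-\gamma_k\mathbf{W}\mathbf{y}_k$, the splitting $\mathbf{W}\mathbf{y}_k=\mathbf{W}(\mathbf{y}_k-\mathbf{1}\bar y_k)+\mathbf{1}\bar y_k$, and Lemma~\ref{lem:prelim_preperties}(c); this is where $\|\mathbf{W}-\mathbf{I}\|^2$ and the $\bar y_k$-variance enter. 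Finally the $\|\mathbf{G}(\mathbf{x}_{k+1})\|^2$ terms coming from $A$ are expanded by Lemma~\ref{lem:prelim_preperties}(g) into $\|\mathbf{x}_{k+1}-\mathbf{1}\bar x_{k+1}\|^2$ and $\|\bar x_{k+1}-x^*\|^2$, into which I substitute the just-proved recursions (b) and (a); this substitution is precisely what injects the factor $\tfrac{(1+\rho_W^2)\rho_W^2}{1-\rho_W^2}$ and the part-(a) coefficient $b^{-1}\gamma_kL^2(\tfrac1\mu+b^{-1}(2b-1)\gamma_k)$ into the stated bounds.

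I expect the main obstacle to be the bookkeeping in (c): the two-level conditioning needed to separate the noise at steps $k$ and $k+1$ from the $\mathscr{F}_{k+1}$-measurable part, and then routing the $k+1$-indexed quantities back through (a)--(b) while keeping every constant aligned. The relaxations $\rho_W<1$, $1+\rho_W^2\le 2$, and $1-\tfrac{\mu\alpha}{2}\le 1$ are what let the proliferation of generated constants (for instance the $3mb\nu^2$ arising as $(1+2)bm\nu^2$ from the factor-one contribution of $A$ and the factor-two contribution of $C$) collapse into the compact coefficients written above; landing these exactly, rather than merely up to absolute constants, is the delicate part.
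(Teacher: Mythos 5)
Your parts (a) and (b) follow the paper's own proof essentially step for step: in (a) you use the same ingredients (Lemma~\ref{lem:bar_x_and_x_star}, Lemma~\ref{lem:prelim_preperties}(c),(e),(g), Young's inequality, and the two stepsize restrictions playing exactly the roles you assign them), merely organized as an exact conditional bias--variance split instead of the paper's term-by-term expansion; your (b) is identical to the paper's, including the Young parameter $(1-\rho_W^2)/(2\gamma_k\rho_W^2)$. Your opening move in (c) --- writing $\mathbf{y}_{k+1}-\mathbf{1}\bar y_{k+1}=(\mathbf{W}\mathbf{y}_k-\mathbf{1}\bar y_k)+b^{-1}(\tilde{\mathbf{D}}_{k+1}-\tilde{\mathbf{D}}_k)$ with the centering projection and invoking its non-expansiveness --- is a slightly cleaner route to the paper's inequality \eqref{eqn:rec3_first_ineq}, and your split $A+B+C$, the bounds $\mathbb{E}[\|A\|^2\mid\mathscr{F}_{k+1}]\le (b-1)\|\mathbf{G}(\mathbf{x}_{k+1})\|^2+mb\nu^2$ and $\mathbb{E}[\|C\|^2\mid\mathscr{F}_k]\le (b-1)\|\mathbf{G}(\mathbf{x}_k)\|^2+mb\nu^2$, the displacement bound for $B$, and the substitution of (a)--(b) into $\|\mathbf{G}(\mathbf{x}_{k+1})\|^2$ all coincide with the paper's Claims 1--4.

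The genuine gap is the cross term $\mathbb{E}[\langle B,C\rangle\mid\mathscr{F}_k]$. Conditioning on $\mathscr{F}_{k+1}$ kills only the cross terms involving $A$; since $\mathbf{x}_{k+1}$ is built from the step-$k$ noise $(\boldsymbol{\xi}_k,\boldsymbol{\ell}_k)$, the terms $B=\mathbf{G}(\mathbf{x}_{k+1})-\mathbf{G}(\mathbf{x}_k)$ and $C=\mathbf{G}(\mathbf{x}_k)-\mathbf{D}_k$ are correlated, so $\mathbb{E}[\langle B,C\rangle\mid\mathscr{F}_k]\neq 0$ in general, and your proposal never says how this term is disposed of --- ``two-level conditioning'' alone cannot separate it, because the step-$k$ noise \emph{is} $\mathscr{F}_{k+1}$-measurable. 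If you handle it by applying Young's inequality directly to $\langle B,C\rangle$, the $\|B\|^2$ contribution doubles, which changes the final constants (e.g.\ the $2L^2\rho_W^2\gamma_k^2$ inside the $\mathbf{y}$-coefficient becomes $4L^2\rho_W^2\gamma_k^2$, and the Claim-3 noise term doubles as well), so the ``exact landing'' you claim does not occur. The paper's Claim 1 resolves this by splitting $\langle B,C\rangle=\langle \mathbf{G}(\mathbf{x}_{k+1}),C\rangle-\langle \mathbf{G}(\mathbf{x}_k),C\rangle$: the second piece has zero conditional mean because $\mathbf{G}(\mathbf{x}_k)$ is $\mathscr{F}_k$-measurable and $\mathbb{E}[C\mid\mathscr{F}_k]=0$, and Young is applied only to the first piece, which is exactly what generates the standalone $\mathbb{E}[\|\mathbf{G}(\mathbf{x}_{k+1})\|^2]$ term and the coefficient $2$ on $\|C\|^2$ --- i.e.\ the multiplicities behind the $(1+2)mb\nu^2=3mb\nu^2$ that you quote but do not derive. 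With the naive Young bound you would still obtain a recursion of the same structural form (hence the $\theta_t$'s of Theorem~\ref{thm:rate}(a) and the final rates survive), but not Proposition~\ref{prop:recursions}(c) with its stated coefficients; supplying the paper's splitting step is what closes the gap.
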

\begin{proof} (a) Multiplying both sides of first equation in \eqref{eqn:udpate_rules_compact} by $\tfrac{1}{m}\bunit^T$ and invoking $\bunit^T \mathbf{W} = \bunit^T$, we obtain $\bar{x}_{k+1} = \bar{x}_k - \gamma_k \bar{y}_k.$ Using Lemma \ref{lem:prelim_preperties}(b) and (c), we can write
\begin{align*}
&\mathbb{E}\left[\|\bar x_{k+1}-x^*\|^2\mid \mathscr{F}_k\right] = \mathbb{E}\left[\|\bar x_k - \gamma_k \bar y_k-x^*\|^2\mid \mathscr{F}_k\right] \\
& = { {\|\bar x_k - x^*\|^2}} -2 \gamma_k (\bar x_k -x^*)^T\mathbb{E}\left[\bar y_k\mid \mathscr{F}_k\right] +\gamma_k^2\mathbb{E}\left[\|\bar y_k\|^2\right.\\&\left.\mid \mathscr{F}_k\right]={ {\|\bar x_k - x^*\|^2}} -2 b^{-1}\gamma_k (\bar x_k -x^*)^TG(\mathbf{x_k})\\
&+b^{-2}\gamma_k^2\mathbb{E}\left[\|b\bar y_k-G(\mathbf{x_k})+G(\mathbf{x_k})\|^2\mid \mathscr{F}_k\right]\leq{ {\|\bar x_k - x^*\|^2}}\\
&  -2 b^{-1}\gamma_k (\bar x_k -x^*)^TG(\mathbf{x_k})+b^{-2}\gamma_k^2\|G(\mathbf{x_k})\|^2\\
&+b^{-2}\gamma_k^2\left(\left(\tfrac{1}{m}+b-1\right)\nu^2+\tfrac{b-1}{m}\|\bG(\bx_k)\|^2\right)\\
& = { {\|\bar x_k - x^*\|^2}} -2 b^{-1}\gamma_k (\bar x_k -x^*)^T(G(\mathbf{x_k})-\mathscr{G}(\bar x_k))\\
&-2 b^{-1}\gamma_k (\bar x_k -x^*)^T\mathscr{G}(\bar x_k) + b^{-2} \gamma_k^2\|G(\mathbf{x_k})-\mathscr{G}(\bar x_k)\|^2\\
&+b^{-2} \gamma_k^2\|\mathscr{G}(\bar x_k)\|^2+2b^{-2} \gamma_k^2(G(\mathbf{x_k})-\mathscr{G}(\bar x_k))^T\mathscr{G}(\bar x_k)\\
& +b^{-2}\gamma_k^2\left(\tfrac{1}{m}+b-1\right)\nu^2 + \tfrac{2L^2b^{-2} \gamma_k^2(b-1)}{m}\|\mathbf{x}_k-\mathbf{1}\bar x_k\|^2\\ 
& +2(b-1)L^2b^{-2} \gamma_k^2\|\bar x_k- x^*\|^2\\
& \leq \|\bar x_k - x^*-b^{-1}\gamma_k\mathscr{G}(\bar x_k)\|^2  \\
&-2 b^{-1}\gamma_k (\bar x_k -b^{-1}\gamma_k\mathscr{G}(\bar x_k)-x^*)^T(G(\mathbf{x_k})-\mathscr{G}(\bar x_k))\\ 
&+ b^{-2} \gamma_k^2\|G(\mathbf{x_k})-\mathscr{G}(\bar x_k)\|^2 +b^{-2}\gamma_k^2\left(\tfrac{1}{m}+b-1\right)\nu^2\\ 
& + \tfrac{2L^2b^{-2} \gamma_k^2(b-1)}{m}\|\mathbf{x}_k-\mathbf{1}\bar x_k\|^2 +2(b-1)L^2b^{-2} \gamma_k^2\|\bar x_k- x^*\|^2.
\end{align*}
Invoking Lemmas \ref{lem:bar_x_and_x_star} and \ref{lem:prelim_preperties}(e) we obtain
\begin{align*}
&{\mathbb{E}\left[\|\bar x_{k+1}-x^*\|^2\mid \mathscr{F}_k\right]} \leq (1-b^{-1}\mu\gamma_k)^2{ {\|\bar x_k - x^*\|^2}}  \\
&+ 2b^{-1}\gamma_k (1-b^{-1}\mu\gamma_k)\|\bar x_k - x^*\|\|G(\mathbf{x_k})-\mathscr{G}(\bar x_k)\|\\ 
 &+ \tfrac{b^{-2}\gamma_k^2L^2}{m}\|\mathbf{x}_k-\mathbf{1} \bar x_k\|^2+b^{-2}\gamma_k^2\left(\tfrac{1}{m}+b-1\right)\nu^2 \\
&+ \tfrac{2L^2b^{-2} \gamma_k^2(b-1)}{m}\|\mathbf{x}_k-\mathbf{1}\bar x_k\|^2 +2(b-1)L^2b^{-2} \gamma_k^2\|\bar x_k- x^*\|^2\\
&\leq \left((1-b^{-1}\mu\gamma_k)^2+2b^{-2}(b-1)\gamma_k^2L^2\right){ {\|\bar x_k - x^*\|^2}}\\
&+  \tfrac{2b^{-1}\gamma_k L(1-b^{-1}\mu\gamma_k)}{\sqrt{m}}\|\bar x_k - x^*\|\|\mathbf{x}_k-\mathbf{1} \bar x_k\|\notag\\
&+  \tfrac{b^{-2}(2b-1)\gamma_k^2L^2}{m}\|\mathbf{x}_k-\mathbf{1} \bar x_k\|^2+b^{-2}\gamma_k^2\left(\tfrac{1}{m}+b-1\right)\nu^2.
\end{align*}
Note that we can write
\begin{align*}
&\tfrac{2b^{-1}\gamma_kL (1-b^{-1}\mu\gamma_k)}{\sqrt{m}}\|\bar x_k - x^*\|\|\mathbf{x}_k-\mathbf{1} \bar x_k\|\notag\\
&= 2b^{-1}\gamma_k\left(\sqrt{\mu}(1-b^{-1}\mu\gamma_k)\|\bar x_k - x^*\|\right)\left(\tfrac{L\|\mathbf{x}_k-\mathbf{1} \bar x_k\|}{\sqrt{\mu m}}\right)\notag\\
& \leq b^{-1}\gamma_k\left(\mu(1-b^{-1}\mu\gamma_k)^2\|\bar x_k - x^*\|^2+\tfrac{L^2\|\mathbf{x}_k-\mathbf{1} \bar x_k\|^2}{\mu m}\right).
\end{align*}
From the preceding two relations, we obtain
\begin{align*}
&{\mathbb{E}\left[\|\bar x_{k+1}-x^*\|^2\mid \mathscr{F}_k\right]}\leq \left((1-b^{-1}\mu\gamma_k)^2(1+\mu b^{-1}\gamma_k)\right.\\
&\left.+2b^{-2}(b-1)\gamma_k^2L^2\right){\|\bar x_k - x^*\|^2}+  \tfrac{b^{-1}\gamma_kL^2}{ m}\left(b^{-1}(2b-1)\gamma_k \right.\\&+\left.\tfrac{1}{\mu}+b^{-2}\gamma_k^2\left(\tfrac{1}{m}+b-1\right)\nu^2 \right)\|\mathbf{x}_k-\mathbf{1} \bar x_k\|^2.
\end{align*}
From $  \gamma_k \leq \tfrac{b\mu}{4(b-1)L^2}$, we obtain 
\begin{align*}
&\mathbb{E}\left[\|\bar x_{k+1}-x^*\|^2\right]\leq   (1-\tfrac{\mu b^{-1}\gamma_k}{2})\mathbb{E}\left[\|\bar x_k - x^*\|^2\right]\\
&+  \tfrac{b^{-1}\gamma_kL^2}{ m}\left(\tfrac{1}{\mu}+ b^{-1}(2b-1)\gamma_k\right)\mathbb{E}\left[\|\mathbf{x}_k-\mathbf{1} \bar x_k\|^2\right]\\
&+b^{-2}\gamma_k^2\left(\tfrac{1}{m}+b-1\right)\nu^2.
\end{align*}

\noindent (b) From Equation \ref{eqn:udpate_rules_compact} and invoking Lemma \ref{lem:Norms}(b), we have
\begin{align*}
&{\|\mathbf{x}_{k+1}-\mathbf{1} \bar x_{k+1}\|^2} = \|\mathbf{W}\mathbf{x}_k-\gamma_k\mathbf{W}\mathbf{y}_k-\mathbf{1}(\bar x_k - \gamma_k\bar y_k)\|^2\\
&=\|\mathbf{W}\mathbf{x}_k-\mathbf{1}\bar x_k\|^2 -2\gamma_k\langle \mathbf{W}\mathbf{x}_k-\mathbf{1}\bar x_k,\mathbf{W}\mathbf{y}_k-\mathbf{1}\bar y_k\rangle\\
&+\gamma_k^2\|\mathbf{W}\mathbf{y}_k-\mathbf{1}\bar y_k\|^2.
\end{align*}
By Invoking Lemma \ref{lem:Spectral_Norms} and Lemma \ref{lem:Norms}(c), we obtain
\begin{align*}
&{\|\mathbf{x}_{k+1}-\mathbf{1} \bar x_{k+1}\|^2}\\
&=\rho_W^2{ {\|\mathbf{x}_k-\mathbf{1}\bar x_k\|^2}} +2\gamma_k\| \mathbf{W}\mathbf{x}_k-\mathbf{1}\bar x_k\|\|\mathbf{W}\mathbf{y}_k-\mathbf{1}\bar y_k\|\\ &+\rho_W^2\gamma_k^2{ {\|\mathbf{y}_k-\mathbf{1}\bar y_k\|^2}}\\
&\leq\rho_W^2{ {\|\mathbf{x}_k-\mathbf{1}\bar x_k\|^2}} +2\rho_W^2\gamma_k\|  \mathbf{x}_k-\mathbf{1}\bar x_k\|\| \mathbf{y}_k-\mathbf{1}\bar y_k\|\\
&+\rho_W^2\gamma_k^2{ {\|\mathbf{y}_k-\mathbf{1}\bar y_k\|^2}}\\
&\leq\rho_W^2{ {\|\mathbf{x}_k-\mathbf{1}\bar x_k\|^2}} +\rho_W^2\gamma_k\left(\tfrac{1-\rho_W^2}{2\gamma_k\rho_W^2}   \|\mathbf{x}_k-\mathbf{1}\bar x_k\|^2\right.\\
&\left.+\tfrac{2\gamma_k\rho_W^2}{1-\rho_W^2}\| \mathbf{y}_k-\mathbf{1}\bar y_k\|^2\right) +\rho_W^2\gamma_k^2{ {\|\mathbf{y}_k-\mathbf{1}\bar y_k\|^2}}\\
&= \tfrac{1+\rho_W^2}{2}  { \|\mathbf{x}_k-\mathbf{1}{\bar x_k\|^2}}+\tfrac{\gamma_k^2(1+\rho_W^2)\rho_W^2}{1-\rho_W^2}{ {\| \mathbf{y}_k-\mathbf{1}\bar y_k\|^2}}. 
\end{align*}
Taking expectations from the last relation, we obtain (b). 

\noindent (c) Next we obtain the third recursive relation. For the ease of presentation, we will use the following compact notation. 
\begin{align*}
&\mathbf{G}_k \triangleq \mathbf{G}(\mathbf{x}_{k}), \quad \tilde{\mathbf{G}}_k \triangleq \mathbf{G}(\mathbf{x}_{k},\boldsymbol{\xi}_{k}),\quad \tilde{\mathbf{G}}^e_k \triangleq\tilde{\mathbf{G}}_k -\mathbf{e}_k,\\
&\nabla_{i,k}\triangleq \nabla f_i(x_{i,k}),\quad \tilde \nabla_{i,k} \triangleq \nabla f_i(x_{i,k},\xi_{i,k}),\\
&\tilde \nabla^e_{i,k} \triangleq \nabla f_i(x_{i,k},\xi_{i,k})-e_{i,k}.
\end{align*}
Note that $\mathbb{E}\left[\tilde{\mathbf{G}}^e_k\mid \mathscr{F}_k\right] = \mathbf{G}_k,\quad \mathbb{E}\left[\tilde{\mathbf{G}}^e_{k+1}\mid \mathscr{F}_{k+1}\right] = \mathbf{G}_{k+1}$.
From Algorithm \ref{algorithm:RB-DSGT},  we have, $ \| \mathbf{y}_{k+1}-\mathbf{1}\bar y_{k+1}\|^2$
\begin{align}\label{eqn:rec3_first_ineq}
& \leq \|\mathbf{W}\mathbf{y}_k+b^{-1}\tilde{\mathbf{G}}^e_{k+1}-b^{-1}\tilde{\mathbf{G}}^e_k-\mathbf{1}\bar y_{k}+\mathbf{1}\bar y_{k}-\mathbf{1}\bar y_{k+1}\|^2\notag\\
& = \|\mathbf{W}\mathbf{y}_k-\mathbf{1}\bar y_{k}\|^2+ b^{-2}\|\tilde{\mathbf{G}}^e_{k+1}-\tilde{\mathbf{G}}^e_k\|^2\notag\\
&+2b^{-1}\langle \mathbf{W} \mathbf{y}_k -\mathbf{1} \bar y_k,\tilde{\mathbf{G}}^e_{k+1}-\tilde{\mathbf{G}}^e_k \rangle + m\|\bar y_{k}-\bar y_{k+1}\|^2 \notag\\
& +2\langle  \mathbf{y}_{k+1} -\mathbf{1} \bar y_k,\mathbf{1}(\bar y_{k}-\bar y_{k+1}) \rangle \leq \rho_W^2\|\mathbf{y}_k-\mathbf{1}\bar y_{k}\|^2\notag\\ &+b^{-2}\|\tilde{\mathbf{G}}^e_{k+1}-\tilde{\mathbf{G}}^e_k\|^2\notag+ 2b^{-1}\langle \mathbf{W} \mathbf{y}_k -\mathbf{1} \bar y_k,\tilde{\mathbf{G}}^e_{k+1}-\tilde{\mathbf{G}}^e_k \rangle\notag\\&-m\|\bar y_{k}-\bar y_{k+1}\|^2\leq\notag (1+b^{-1}\eta)\rho_W^2{{\| \mathbf{y}_k-\mathbf{1}\bar y_k\|^2}}\\&+ (\tfrac{1}{b^2}+\tfrac{1}{b\eta})\|\tilde{\mathbf{G}}^e_{k+1}-\tilde{\mathbf{G}}^e_k\|^2,
\end{align}
where $\eta>0$ is an arbitrary scalar. In the following, we present a few intermediary results that will be used to derive the third recursive inequality.

\noindent \textbf{Claim 1}: The following holds
\begin{align}\label{eqn:third_recursion-claim1}
& \mathbb{E}\left[\|\tilde{\mathbf{G}}^e_{k+1}-\tilde{\mathbf{G}}^e_k\|^2\mid \mathscr{F}_k\right] \leq  \mathbb{E}\left[\|\mathbf{G}_{k+1}-\mathbf{G}_k\|^2\mid \mathscr{F}_k\right] \nonumber\\
&+ \mathbb{E}\left[\| \mathbf{G}_{k+1}\|^2 \mid \mathscr{F}_k\right]  + \mathbb{E}\left[\| \tilde{\mathbf{G}}^e_{k+1} -\mathbf{G}_{k+1} \|^2 \mid \mathscr{F}_k\right] \notag\\
&+ 2\mathbb{E}\left[\| \tilde{\mathbf{G}}^e_k-\mathbf{G}_k\|^2 \mid \mathscr{F}_k\right].
\end{align}
\begin{proof}We can write
\begin{align}\label{eqn:third_recursion_proof_eqn1}
& \mathbb{E}\left[\|\tilde{\mathbf{G}}^e_{k+1}-\tilde{\mathbf{G}}^e_k\|^2\mid \mathscr{F}_k\right] = \mathbb{E}\left[\|\mathbf{G}_{k+1}-\mathbf{G}_k\|^2\mid \mathscr{F}_k\right] \notag\\
& + 2\mathbb{E}\left[\langle \mathbf{G}_{k+1}, \tilde{\mathbf{G}}^e_{k+1}-\tilde{\mathbf{G}}^e_k-\mathbf{G}_{k+1}+\mathbf{G}_k\rangle \mid \mathscr{F}_k\right]\notag \\
& -2\mathbb{E}\left[\langle \mathbf{G}_k, \tilde{\mathbf{G}}^e_{k+1}-\tilde{\mathbf{G}}^e_k-\mathbf{G}_{k+1}+\mathbf{G}_k\rangle \mid \mathscr{F}_k\right] \notag\\
& +\mathbb{E}\left[\| \tilde{\mathbf{G}}^e_{k+1}-\tilde{\mathbf{G}}^e_k-\mathbf{G}_{k+1}+\mathbf{G}_k\|^2 \mid \mathscr{F}_k\right].
\end{align}
Note that since $\mathbf{x}_{k+1}$ is characterized in terms of $\boldsymbol{\xi}_k$, using Assumption \ref{assum:stoch_errors},  Assumption \ref{assum:rand_block}, and Lemma \ref{lem:random_block_error} we have
$\mathbb{E}\left[\tilde{\mathbf{G}}^e_{k+1} -\mathbf{G}_{k+1}\mid \mathscr{F}_k\right]=0,$ and
 $\mathbb{E}\left[\tilde{\mathbf{G}}^e_{k} -\mathbf{G}_{k}\mid \mathscr{F}_k\right]  =0$. Thus, we obtain
\begin{align*}
&\mathbb{E}\left[\langle \mathbf{G}_k, \tilde{\mathbf{G}}^e_{k+1}-\tilde{\mathbf{G}}^e_k-\mathbf{G}_{k+1}+\mathbf{G}_k\rangle \mid \mathscr{F}_k\right] 
= 0.
 \end{align*}
We can also write 
\begin{align*}
& \mathbb{E}\left[\langle \mathbf{G}_{k+1}, \tilde{\mathbf{G}}^e_{k+1}-\tilde{\mathbf{G}}^e_k-\mathbf{G}_{k+1}+\mathbf{G}_k\rangle \mid \mathscr{F}_k\right]\\
&= \mathbb{E}_{\boldsymbol{\xi}_k,\boldsymbol{\ell}_k}\left[\langle \mathbf{G}_{k+1}, \mathbb{E}_{\boldsymbol{\xi}_{k+1},\boldsymbol{\ell}_{k+1}}\left[\tilde{\mathbf{G}}^e_{k+1} -\mathbf{G}_{k+1} \mid \mathscr{F}_{k+1} \right]\rangle\right]+ \\
& \mathbb{E}\left[\langle \mathbf{G}_{k+1}, -\tilde{\mathbf{G}}^e_k +\mathbf{G}_k\rangle \mid \mathscr{F}_k\right]= \mathbb{E}\left[\langle \mathbf{G}_{k+1}, -\tilde{\mathbf{G}}^e_k +\mathbf{G}_k\rangle \mid \mathscr{F}_k\right].
\end{align*}
From the preceding relations, we have
\begin{align*}
& \mathbb{E}\left[\|\tilde{\mathbf{G}}^e_{k+1}-\tilde{\mathbf{G}}^e_k\|^2\mid \mathscr{F}_k\right] 
 \leq  \mathbb{E}\left[\|\mathbf{G}_{k+1}-\mathbf{G}_k\|^2\mid \mathscr{F}_k\right] \\
&+ 2\mathbb{E}\left[\langle \mathbf{G}_{k+1}, -\tilde{\mathbf{G}}^e_k +\mathbf{G}_k\rangle \mid \mathscr{F}_k\right] \\
& + \mathbb{E}\left[\| \tilde{\mathbf{G}}^e_{k+1} -\mathbf{G}_{k+1} \|^2 \mid \mathscr{F}_k\right]+ \mathbb{E}\left[\| \tilde{\mathbf{G}}^e_k-\mathbf{G}_k\|^2 \mid \mathscr{F}_k\right]\\
&-2\mathbb{E}\left[\langle \tilde{\mathbf{G}}^e_{k+1} -\mathbf{G}_{k+1},\tilde{\mathbf{G}}^e_k-\mathbf{G}_k \rangle \mid \mathscr{F}_k\right].
\end{align*}
Note that we have
\begin{align*}
& \mathbb{E}\left[\langle \tilde{\mathbf{G}}^e_{k+1} -\mathbf{G}_{k+1},\tilde{\mathbf{G}}^e_k-\mathbf{G}_k \rangle \mid \mathscr{F}_k\right] 
= \mathbb{E}_{\boldsymbol{\xi}_k,\boldsymbol{\ell}_k}\left[\langle \mathbb{E}_{\boldsymbol{\xi}_{k+1},\boldsymbol{\ell}_{k+1}}\right.\\ &\left.\left[\tilde{\mathbf{G}}^e_{k+1} -\mathbf{G}_{k+1}\mid \mathscr{F}_{k+1} \right],\tilde{\mathbf{G}}^e_k-\mathbf{G}_k\rangle \right]=0.
\end{align*}
Also, we have,
$2\mathbb{E}\left[\langle \mathbf{G}_{k+1}, -\tilde{\mathbf{G}}^e_k +\mathbf{G}_k\rangle \mid \mathscr{F}_k\right] $
\begin{align*}
&\leq \mathbb{E}\left[\| \mathbf{G}_{k+1}\|^2 \mid \mathscr{F}_k\right] +\mathbb{E}\left[\| \tilde{\mathbf{G}}^e_{k} -\mathbf{G}_{k} \|^2 \mid \mathscr{F}_k\right].
\end{align*}
From the last three relations, we obtain Claim 1.
\end{proof}

\noindent \textbf{Claim 2:} We have, \ $\mathbb{E}\left[\| \tilde{\mathbf{G}}^e_{k+1} -\mathbf{G}_{k+1} \|^2 \mid \mathscr{F}_k\right]$
\begin{align}\label{eqn:third_recursion-claim2}
& \leq mb\nu^2+(b-1)\mathbb{E}\left[\|\mathbf{G}_{k+1}\|^2\mid \mathscr{F}_{k}\right],\notag\\
&\mathbb{E}\left[\| \tilde{\mathbf{G}}^e_k-\mathbf{G}_k\|^2 \mid \mathscr{F}_k\right] \leq  mb\nu^2 + (b-1)\|\bG_k\|^2.
\end{align}

\begin{proof} From Assumption \ref{assum:stoch_errors} and Lemma \ref{lem:random_block_error}, we have
\begin{align*}
&\mathbb{E}\left[\| \tilde{\mathbf{G}}^e_k-\mathbf{G}_k\|^2 \mid \mathscr{F}_k\right] =\mathbb{E}\left[\| \tilde{\mathbf{G}}_k -\mathbf{e}_k-\mathbf{G}_k\|^2 \mid \mathscr{F}_k\right] \\
& = \mathbb{E}\left[\| \tilde{\mathbf{G}}_k -\mathbf{G}_k\|^2 \mid \mathscr{F}_k\right]+\mathbb{E}\left[\| \mathbf{e}_k\|^2 \mid \mathscr{F}_k\right]\\
& = mb\nu^2 + (b-1)\|\bG_k\|^2.
\end{align*}
Using this relation, we can also write
\begin{align*}
&\mathbb{E}\left[\| \tilde{\mathbf{G}}^e_{k+1} -\mathbf{G}_{k+1} \|^2 \mid \mathscr{F}_k\right] \\
&=\mathbb{E}_{\boldsymbol{\xi}_k,\boldsymbol{\ell}_k}\left[\mathbb{E}_{\boldsymbol{\xi}_{k+1},\boldsymbol{\ell}_{k+1}}\left[\|\tilde{\mathbf{G}}^e_{k+1} -\mathbf{G}_{k+1} \|^2  \mid \mathscr{F}_{k+1} \right]\right]\\
&\leq mb\nu^2+(b-1)\mathbb{E}\left[\|\mathbf{G}_{k+1}\|^2\mid \mathscr{F}_{k}\right].
\end{align*}
\end{proof}
\noindent \textbf{Claim 3}: The following inequality holds.
\begin{align}\label{eqn:third_recursion-claim3}
&\mathbb{E}\left[\|\mathbf{G}_{k+1}-\mathbf{G}_k\|^2 \mid \mathscr{F}_k\right]\leq 2L^2\left(b^{-2}L^2\gamma_k^2+\|\mathbf{W}-\mathbf{I}\|^2\right)\notag\\
&\|\mathbf{x}_k-\mathbf{1}\bar x_k\|^2+2L^2\rho_W^2\gamma_k^2\mathbb{E}\left[\|\mathbf{y}_k-\mathbf{1} \bar y_k	\|^2 \mid \mathscr{F}_k\right]\notag\\
&  +2b^{-2}m L^4\gamma_k^2\|\bar x_{k}-x^*\|^2 + L^2\gamma_k^2b^{-2}(b-1)\|\bG_k\|^2\notag\\
& + m L^2b^{-2}(\tfrac{1}{m}+b-1)\nu^2\gamma_k^2.
\end{align}
\begin{proof}From the Lipschitzian property of the local objectives we have 
$ \|\mathbf{G}_{k+1}-\mathbf{G}_k\|^2 \leq L^2\|\mathbf{x}_{k+1}-\mathbf{x}_k\|^2. $
Next, we estimate the term $\|\mathbf{x}_{k+1}-\mathbf{x}_k\|^2$. We have
\begin{align*}
&\|\mathbf{x}_{k+1}-\mathbf{x}_{k}\|^2 =\|\mathbf{W}\mathbf{x}_k-\gamma_k\mathbf{W}\mathbf{y}_k-\mathbf{x}_k\|^2\\
& = \|(\mathbf{W}-\mathbf{I})(\mathbf{x}_k-\mathbf{1}\bar x_k)-\gamma_k\mathbf{W}\mathbf{y}_k\|^2\\
& =  \|\mathbf{W}-\mathbf{I}\|^2\|\mathbf{x}_k-\mathbf{1}\bar x_k\|^2+\gamma_k^2\|\mathbf{W}\mathbf{y}_k-\mathbf{1} \bar y_k	\|^2\\
&+m\gamma_k^2\|\bar y_k\|^2 -2\gamma_k\langle (\mathbf{W}-\mathbf{I})(\mathbf{x}_k-\mathbf{1}\bar x_k), \mathbf{W}\mathbf{y}_k-\mathbf{1}\bar y_k\rangle \\
& \leq \|\mathbf{W}-\mathbf{I}\|^2\|\mathbf{x}_k-\mathbf{1}\bar x_k\|^2+\rho_W^2\gamma_k^2\|\mathbf{y}_k-\mathbf{1} \bar y_k\|^2\\
&+m\gamma_k^2\|\bar y_k\|^2 +2\rho_{W}\gamma_k\|\mathbf{W}-\mathbf{I}\|\|\mathbf{x}_k-\mathbf{1}\bar x_k\|  \|\mathbf{y}_k-\mathbf{1}\bar y_k \|\\
& \leq 2\|\mathbf{W}-\mathbf{I}\|^2\|\mathbf{x}_k-\mathbf{1}\bar x_k\|^2+2\rho_W^2\gamma_k^2\|\mathbf{y}_k-\mathbf{1} \bar y_k	\|^2\\
&+m\gamma_k^2b^{-2}\|b\bar y_k\|^2.
\end{align*}

From Lemma \ref{lem:prelim_preperties}(b) and (c) we have
\begin{align*}
&\mathbb{E}\left[\|b\bar y_k\|^2\mid \mathscr{F}_k\right] \leq  \left(\tfrac{1}{m}+b-1\right)\nu^2+\tfrac{b-1}{m}\|\bG_k\|^2+ \left\|G_k\right\|^2.
\end{align*}
Also, from Lemma \ref{lem:prelim_preperties}(e) and (f) we have
\begin{align*}
& \left\|G_k\right\|^2 \leq2\|G_k - \mathscr{G}(\bar{x}_k) \|^2+ 2\|\mathscr{G}(\bar{x}_k) \|^2\\
& \leq \tfrac{2L^2}{m}\left\|\mathbf{x}_k-\mathbf{1}\bar x_k\right\|^2+ 2L^2\|\bar x_k-x^*\|^2.
\end{align*}
From the preceding relations, we obtain Claim 3. 
\end{proof}

\noindent \textbf{Claim 4}: We have
\begin{align*}
& \mathbb{E}\left[\|\mathbf{G}_{k+1}\|^2\mid \mathscr{F}_{k}\right] \leq 2L^2m\|\bar x_k - x^*\|^2\notag\\
&+\tfrac{2L^2\gamma_k^2(1+\rho_W^2)\rho_W^2}{1-\rho_W^2} \mathbb{E}\left[\|\mathbf{y}_k-\mathbf{1}\bar y_k\|^2\mid \mathscr{F}_{k}\right]\notag\\
&+2L^2\left(1 +b^{-1}\gamma_kL^2(\tfrac{1}{\mu}+b^{-1}(2b-1)\gamma_k )\right){ \|\mathbf{x}_k-\mathbf{1}{\bar x_k\|^2}}\notag\\
&+ 2mL^2b^{-2}\nu^2(\tfrac{1}{m}+b-1) \gamma_k^2.
\end{align*}
\begin{proof}
From Lemma \ref{lem:prelim_preperties}(g) we can write
\begin{align*}
&\mathbb{E}\left[\|\mathbf{G}_{k+1}\|^2\mid \mathscr{F}_{k}\right]\leq 2L^2\mathbb{E}\left[\|\mathbf{x}_{k+1}-\mathbf{1}\bar x_{k+1}\|^2\mid \mathscr{F}_{k}\right]\\
&+2mL^2\mathbb{E}\left[\|\bar x_{k+1}- x^*\|^2\mid \mathscr{F}_{k}\right].
\end{align*} 
From the first two recursive bounds, substituting  $\mathbb{E}\left[\|\mathbf{x}_{k+1}-\mathbf{1}\bar x_{k+1}\|^2\mid \mathscr{F}_{k}\right]$ and $\mathbb{E}\left[\|\bar x_{k+1}- x^*\|^2\mid \mathscr{F}_{k}\right]$,  we can conclude Claim 4.
\end{proof}
By combining \eqref{eqn:rec3_first_ineq}, Claims 1 to 4, and Lemma \ref{lem:prelim_preperties}(g), we obtain
\begin{align*}
 &\mathbb{E}\left[ \| \mathbf{y}_{k+1}-\mathbf{1}\bar y_{k+1}\|^2\right]\notag\\
&\leq \left( (1+b^{-1}\eta)\rho_W^2+\gamma_k^2(\tfrac{1}{b^2}+\tfrac{1}{b\eta})\left(2L^2\rho_W^2\right.\right. \notag\\
&\left.\left.+\tfrac{2(b-1)L^2(1+\rho_W^2)\rho_W^2}{1-\rho_W^2}\right)\right)\mathbb{E}\left[\|\mathbf{y}_k-\mathbf{1} \bar y_k	\|^2 \right]\notag\\
&  +2L^2m(\tfrac{1}{b^2}+\tfrac{1}{b\eta})\left(b^{-2} L^2\gamma_k^2 \right.\notag\\
&\left.+(b-1)\left(3+L^2\gamma_k^2b^{-2} \right)\right)\mathbb{E}\left[\|\bar x_{k}-x^*\|^2 \right] \notag\\
&+2L^2\left(b^{-2}L^2\gamma_k^2+(b-1)(3+L^2\gamma_k^2b^{-2}  \right.\\
&\left.+b^{-1}\gamma_kL^2(\tfrac{1}{\mu}+b^{-1}(2b-1)\gamma_k ))\right.\\
& \left.+\|\mathbf{W}-\mathbf{I}\|^2\right)(\tfrac{1}{b^2}+\tfrac{1}{b\eta})\mathbb{E}\left[\|\mathbf{x}_k-\mathbf{1}\bar x_k\|^2\right]\notag\\
& +(\tfrac{1}{b^2}+\tfrac{1}{b\eta})\nu^2 \left(m L^2b^{-2}(\tfrac{1}{m}+b-1)\gamma_k^2+3mb\right).
\end{align*}
\end{proof}
\begin{table*}[t]
\renewcommand\thetable{1}
	\captionsetup{labelformat=empty}
\centering{
 \begin{tabular}{l | c  c  c }
   &(MNIST, $m=5$)&(Synthetic, $m=5$)&(Synthetic, $m=10$) \\
 \hline\\
\rotatebox[origin=c]{90}{{\footnotesize $ \log \left(\sum_{i=1}^mf_i(\bar x_k)\right)$}}
&
\begin{minipage}{.22\textwidth}
\includegraphics[scale=0.28, angle=0]{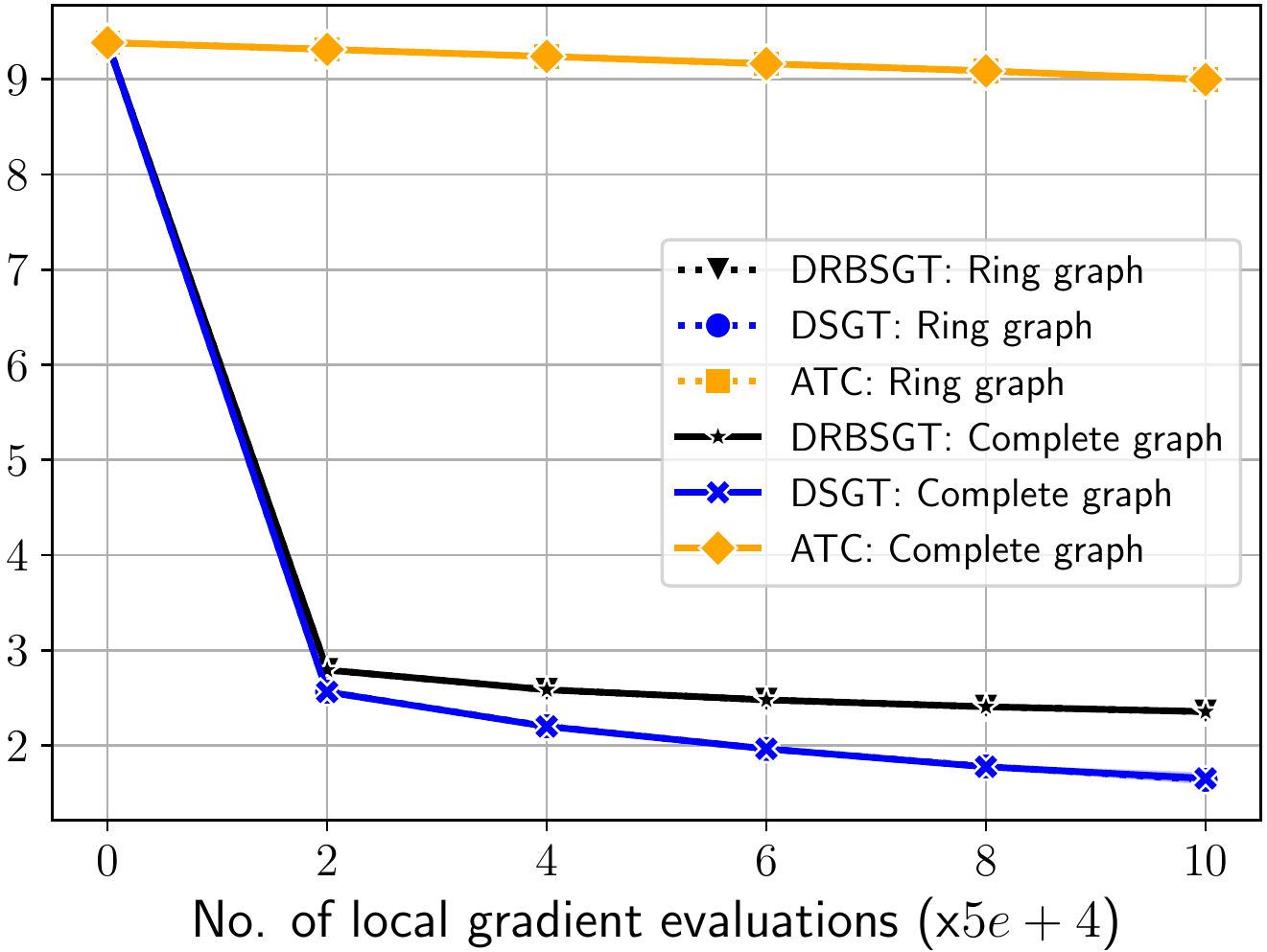}
\end{minipage}
&
\begin{minipage}{.22\textwidth}
\includegraphics[scale=0.28, angle=0]{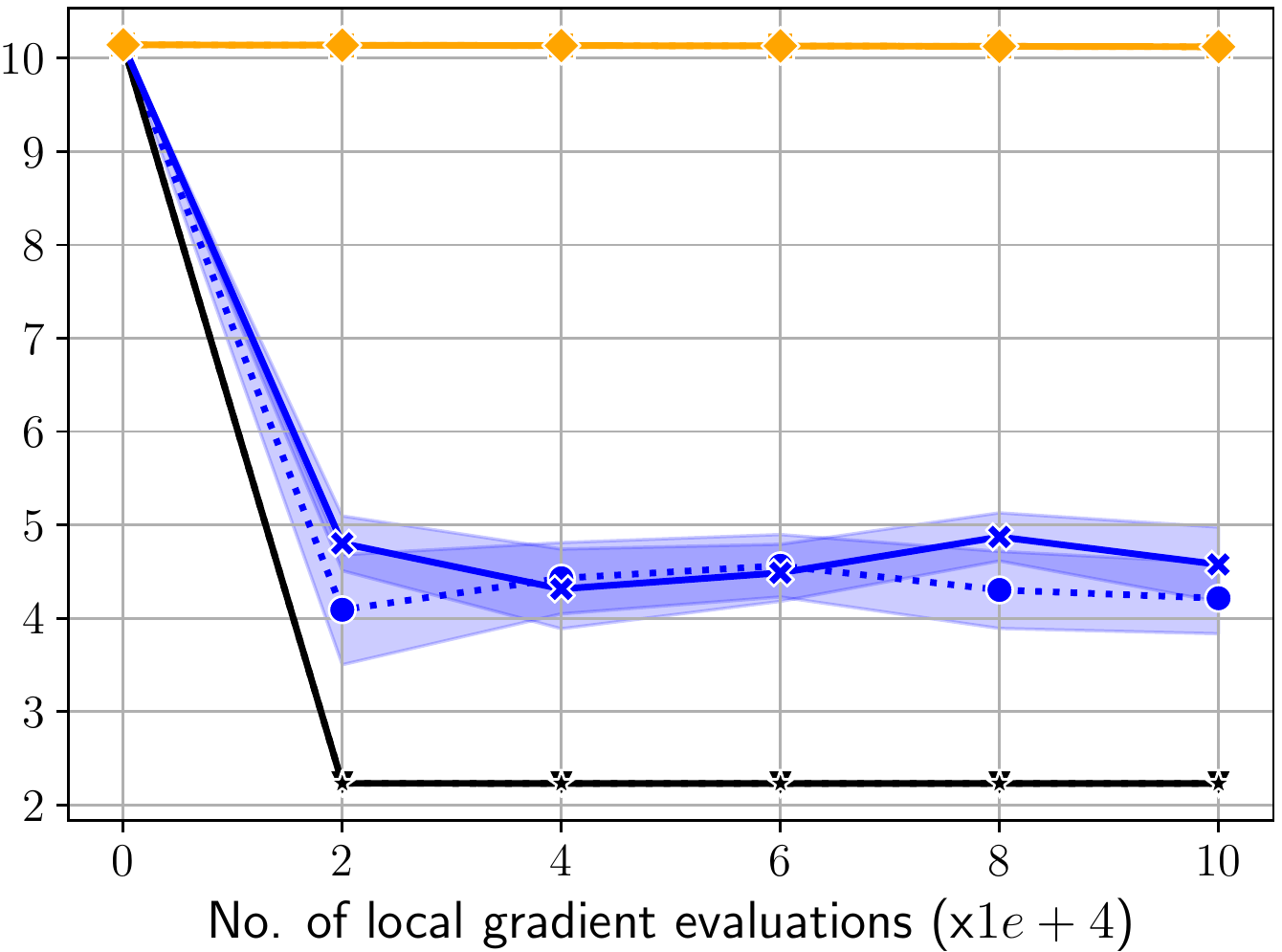}
\end{minipage}
	&
\begin{minipage}{.22\textwidth}
\includegraphics[scale=0.28, angle=0]{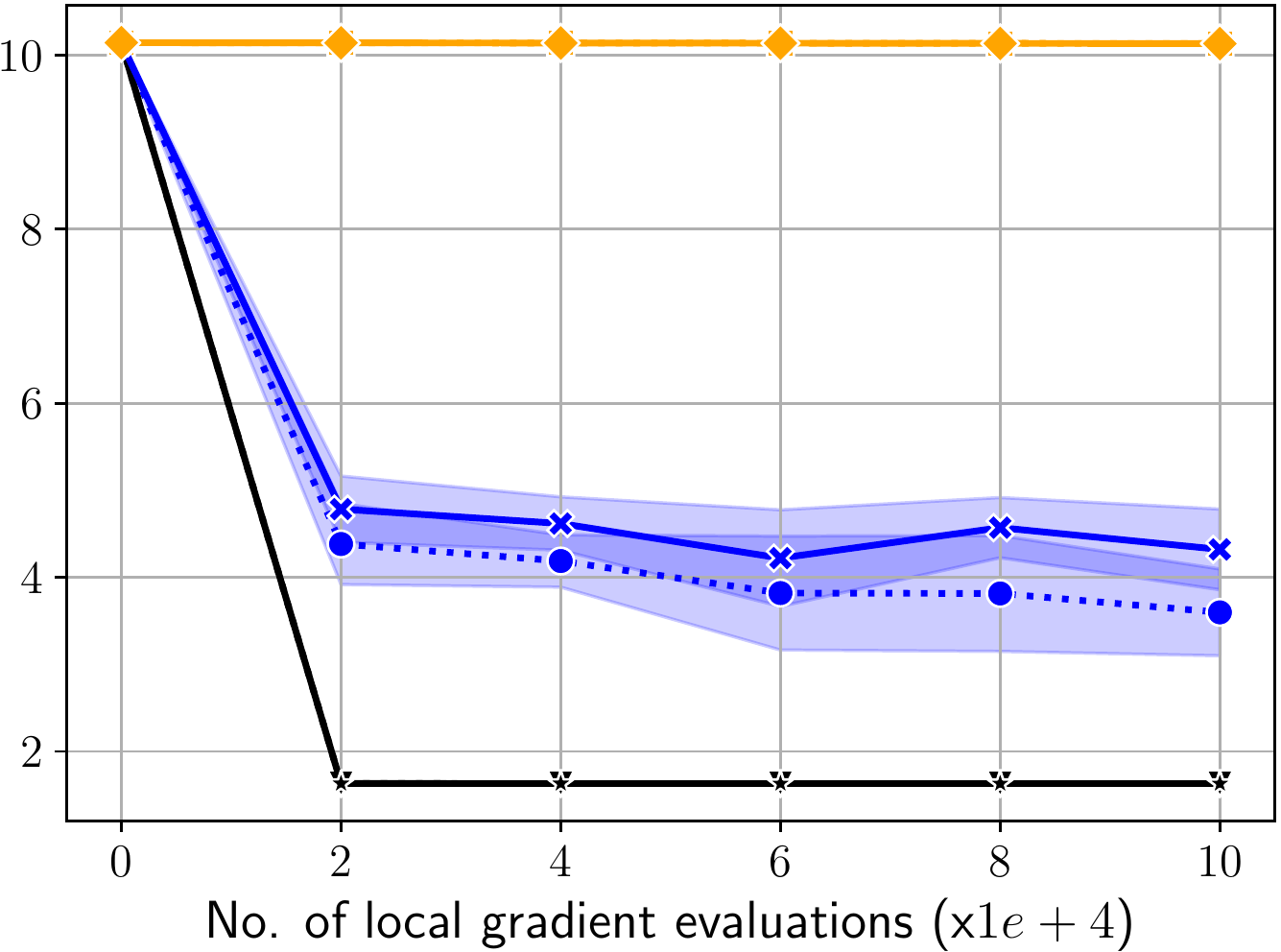}
\end{minipage}

\\
\hbox{}& & & \\
 \hline\\
\rotatebox[origin=c]{90}{{\footnotesize$ \log \left(\left\|\mathbf{x}_k-\mathbf{1}\bar{x}_k\right\|\right)$}}
&
\begin{minipage}{.22\textwidth}
\includegraphics[scale=0.28, angle=0]{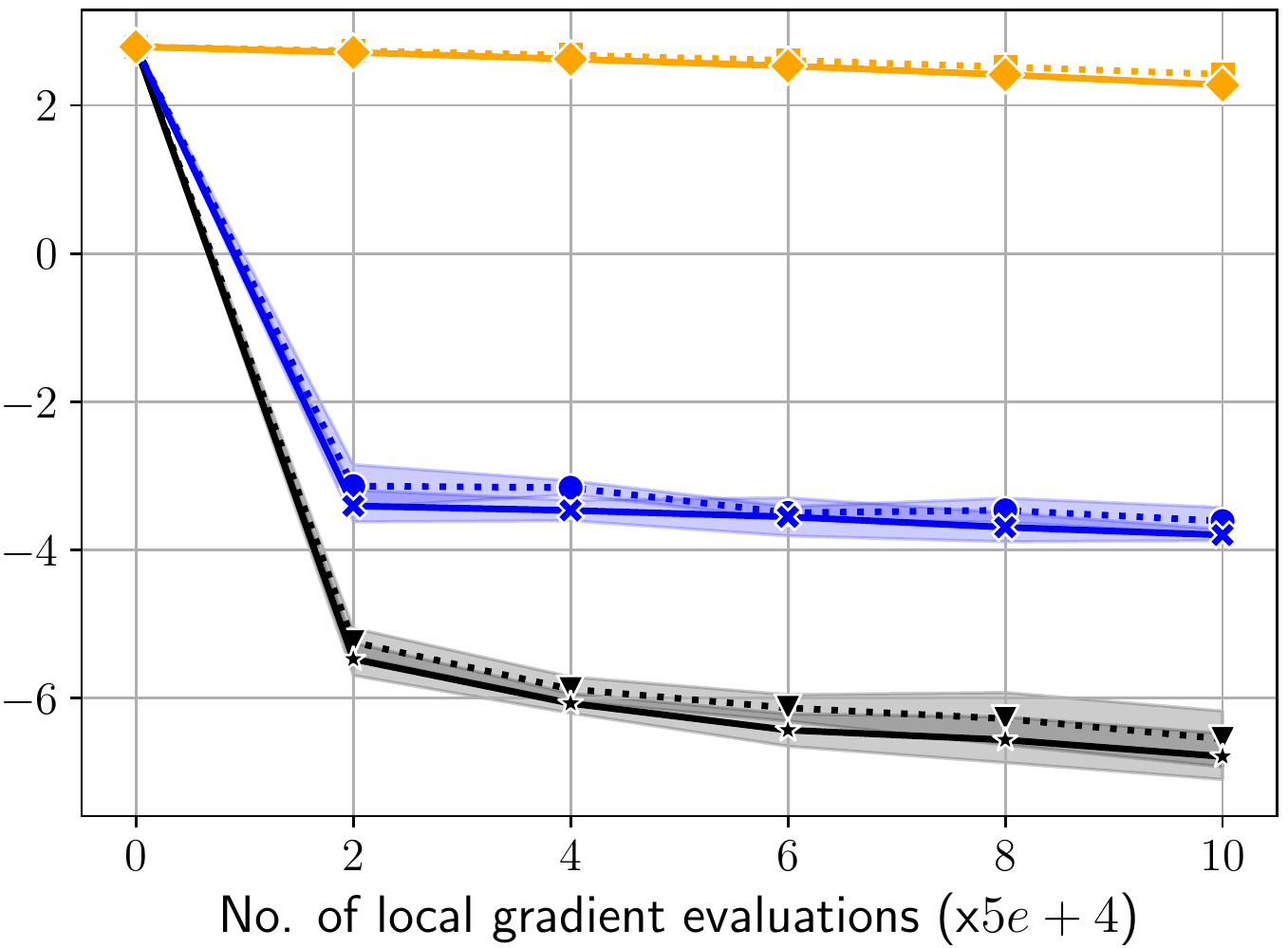}
\end{minipage}
&
\begin{minipage}{.22\textwidth}
\includegraphics[scale=0.28, angle=0]{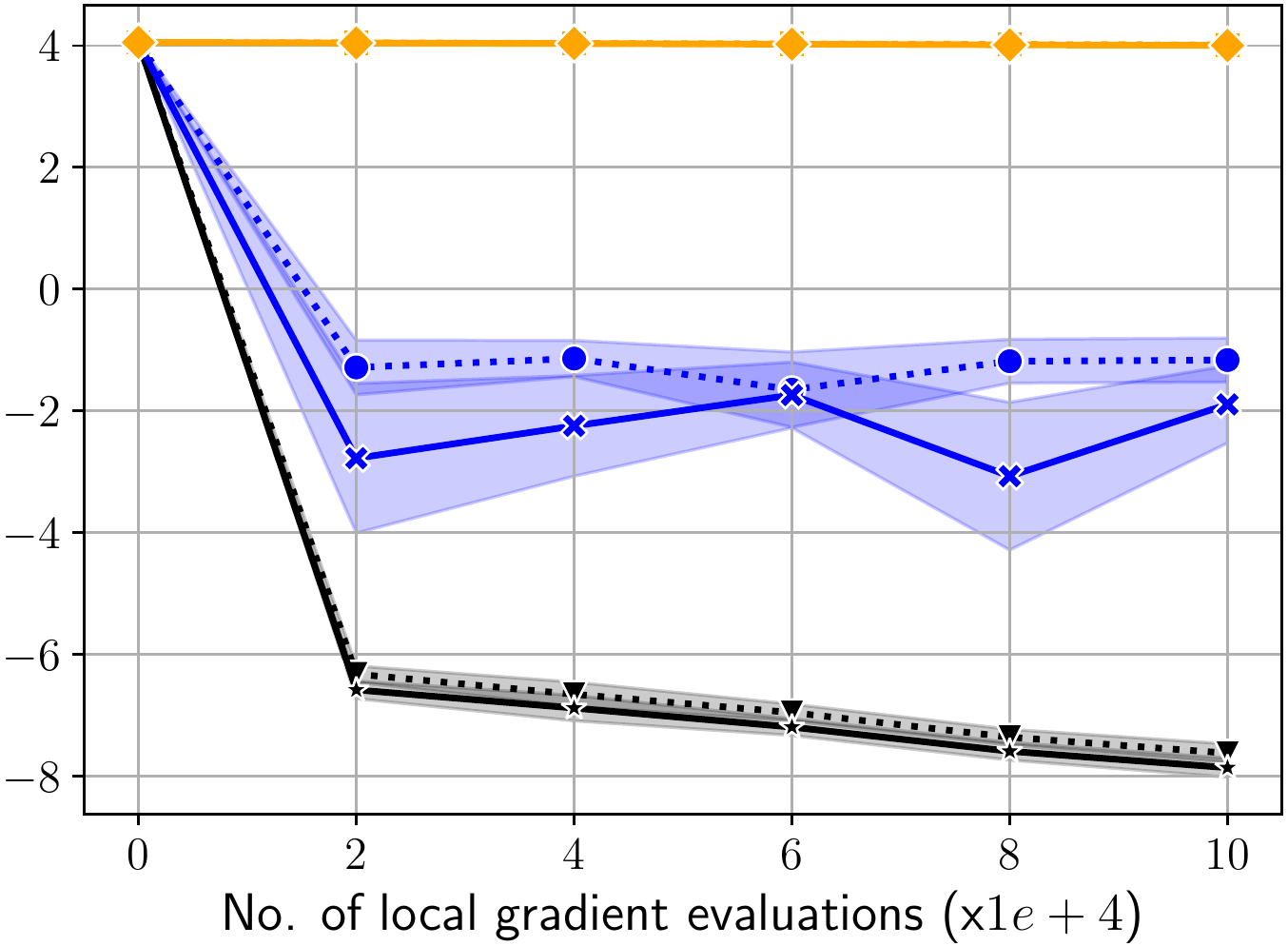}
\end{minipage}
&
\begin{minipage}{.22\textwidth}
\includegraphics[scale=0.28, angle=0]{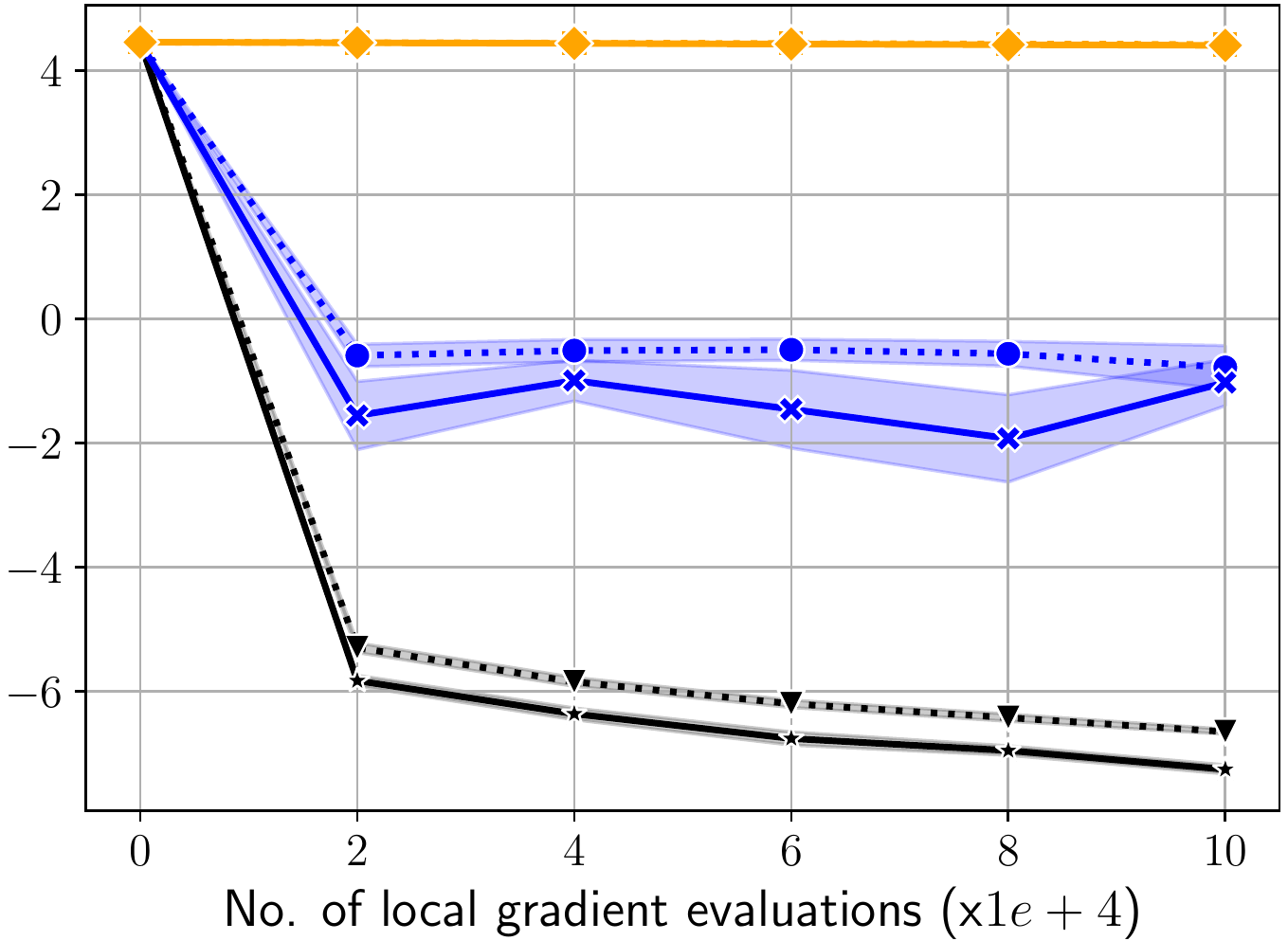}
\end{minipage}
\end{tabular}}
\captionof{figure}{{Figure 1}: Algorithm \ref{algorithm:RB-DSGT} vs. DSGT vs. ATC in terms of objective function value and consensus error}
\label{fig:Alg_Output}
\vspace{-.2in}
\end{table*}
We are now ready to present the main result of the paper. 
\begin{theorem}[Rate statements]\label{thm:rate} 
Consider Algorithm \ref{algorithm:RB-DSGT}.  Let Assumptions \ref{assum:problem}, \ref{assum:stoch_errors}, \ref{assum:rand_block}, \ref{assum:Stochastic_W}, and \ref{assum:Graph} hold. Let us define $\mbox{err}_{1,k} \triangleq \mathbb{E}\left[\|\bar x_k - x^*\|^2\right]$, $\mbox{err}_{2,k}\triangleq \left[\|\mathbf{x}_k-\mathbf{1} \bar x_k\|^2\right]$, and $\mbox{err}_{3,k}\triangleq \mathbb{E}\left[ \| \mathbf{y}_{k}-\mathbf{1}\bar y_{k}\|^2\right]$ for $k\geq 0$.  Suppose $\gamma_k:=\tfrac{\gamma}{k+\Gamma}$ with $\gamma>0$, 
\begin{align}\label{ineq:Gamma_first_ineq}
&\Gamma \geq \gamma \sqrt{\tfrac{3}{1-\rho_W^2}(\tfrac{1}{b^2}+\tfrac{1}{b\eta})\left(2L^2\rho_W^2+\tfrac{2(b-1)L^2(1+\rho_W^2)\rho_W^2}{1-\rho_W^2}\right)},\notag\\
&\Gamma > \gamma, \text{ and }{\Gamma}\geq   \gamma\left(\min\left\{ \tfrac{2b}{\mu + L}, \tfrac{b\mu}{4(b-1)L^2}\right\}\right)^{-1}.
\end{align}
\noindent (a) Then, there exist positive scalars $\theta_t>0$ for $t=1,\ldots,9$ with $\theta_4<1$ and $\theta_6<1$ such that for all $k \geq 0$ we have
\begin{align*}
& \mbox{err}_{1,k+1} \leq (1-\theta_1\gamma_k)\mbox{err}_{1,k} + \theta_2\gamma_k\mbox{err}_{2,k}+\theta_3\gamma_k^2,\\
&\mbox{err}_{2,k+1} \leq (1-\theta_4)\mbox{err}_{2,k} + \theta_5\gamma_k^2\mbox{err}_{3,k},\\
&  \mbox{err}_{3,k+1} \leq (1-\theta_6)\mbox{err}_{3,k} + \theta_7\mbox{err}_{1,k}+\theta_8\mbox{err}_{2,k}+\theta_9.
\end{align*}

\noindent (b) Let $\gamma >\tfrac{1}{\theta_1}$. Let us define  $\hat{\mbox{err}}_{1} := \Gamma\mbox{err}_{1,0}{{N}}_1$, $ \hat{\mbox{err}}_{2} :=  \Gamma\mbox{err}_{2,0}{{N}}_2$, \\and $ \hat{\mbox{err}}_{3} :=\max\left\{ \Gamma\tfrac{3\theta_9}{\theta_6}{{N}}_3,\mbox{err}_{3,0}\right\}$, where ${{N}}_1,{{N}}_2,{{N}}_3>0$ are given as ${{N}}_1 : = \tfrac{2C_2}{C_3\Gamma- 2C_1C_4C_5}$, ${{N}}_2: =\tfrac{2C_4C_5}{ \Gamma}{{N}}_1$, and ${{N}}_3: =\tfrac{C_5}{ \Gamma}{{N}}_1$, where 
$C_1\triangleq \gamma \theta_2\mbox{err}_{2,0}$, $C_2\triangleq \theta_3\gamma^2$, $C_3 \triangleq  \left(\gamma\theta_1-1\right)\mbox{err}_{1,0}$, $C_4 \triangleq \tfrac{6\theta_9\theta_5\gamma^2}{\mbox{err}_{2,0}\theta_4\theta_6}$, $C_5\triangleq  \tfrac{\theta_7\mbox{err}_{1,0}}{\theta_9}$, and $C_6 \triangleq \tfrac{\theta_8\mbox{err}_{2,0}}{\theta_9}$. Then, if $\Gamma > \max\left\{\sqrt{2C_4C_6},\tfrac{2C_1C_4C_5}{C_3},\tfrac{4}{\theta_4}-1\right\}$ and $\eta := \tfrac{b}{2} \left(\tfrac{1- \rho_W^2}{\rho_W^2}\right)$, then we have
\begin{equation}\label{eqn:eik}\boxed{\begin{aligned}
 \mbox{err}_{1,k} \leq  \tfrac{\hat{\mbox{err}}_{1}}{k+\Gamma}, \qquad \mbox{err}_{2,k} \leq \tfrac{\hat{\mbox{err}}_{2}}{(k+\Gamma)^2}, \qquad   \mbox{err}_{3,k} \leq \hat{\mbox{err}}_{3},
\end{aligned}}
\end{equation}
for all
\begin{align}\label{eqn:k_cond}
		k > \gamma\sqrt{\tfrac{{\left(\tfrac{1}{b^2}+\tfrac{2\rho_{W}^2}{b^2(1-\rho_{W}^2)}\right)\left(2L^2\rho_W^2+\tfrac{2(b-1)L^2(1+\rho_W^2)\rho_W^2}{1-\rho_W^2}\right)}}{(1+\rho_W^2)/{2}}}- \Gamma.
	\end{align} 
\end{theorem}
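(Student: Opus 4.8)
The plan is to treat part (a) as a bookkeeping reduction of Proposition \ref{prop:recursions} and part (b) as a simultaneous induction on the three error sequences under the ansatz \eqref{eqn:eik}.

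For part (a) I would read the three recursions directly off Proposition \ref{prop:recursions}(a)--(c). Several constants come out immediately: from (a) I set $\theta_1=\tfrac{\mu}{2b}$ and $\theta_3=b^{-2}(\tfrac1m+b-1)\nu^2$, and since the coefficient of $\mathrm{err}_{2,k}$ there is $\gamma_k$ times the bounded factor $\tfrac{L^2}{bm}(\tfrac1\mu+\tfrac{2b-1}{b}\gamma_k)$, I absorb the stepsize (nonincreasing, so $\gamma_k\le\gamma_0=\gamma/\Gamma$) into a single constant $\theta_2$. From (b) I read $\theta_4=\tfrac{1-\rho_W^2}{2}\in(0,1)$ and $\theta_5=\tfrac{(1+\rho_W^2)\rho_W^2}{1-\rho_W^2}$. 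The only delicate point is the coefficient of $\mathrm{err}_{3,k}$ in (c): I bound every residual $\gamma_k$ by $\gamma_0$ to form uniform constants $\theta_7,\theta_8,\theta_9$, and use the first inequality in \eqref{ineq:Gamma_first_ineq} to force the leading coefficient $(1+b^{-1}\eta)\rho_W^2+\gamma_0^2(\tfrac1{b^2}+\tfrac1{b\eta})(\dots)$ strictly below $1$, which is exactly what delivers a positive $\theta_6$.

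For part (b) I would prove \eqref{eqn:eik} by induction on $k$, carrying all three bounds at once. The base case $k=0$ reduces to checking $N_1\ge1$ and $N_2\ge\Gamma$ against the definitions (the $\mathrm{err}_3$ bound holds because $\hat{\mathrm{err}}_3\ge\mathrm{err}_{3,0}$ by construction). For the inductive step I substitute $\mathrm{err}_{1,k}\le\hat{\mathrm{err}}_1/(k+\Gamma)$, $\mathrm{err}_{2,k}\le\hat{\mathrm{err}}_2/(k+\Gamma)^2$, $\mathrm{err}_{3,k}\le\hat{\mathrm{err}}_3$ into the three recursions with $\gamma_k=\gamma/(k+\Gamma)$, multiply by the appropriate power of $(k+\Gamma)$, and compare against the $(k+1+\Gamma)$ target. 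For $\mathrm{err}_1$, using $\gamma\theta_1>1$ and $\tfrac{k+\Gamma}{k+1+\Gamma}\le1$, the step collapses to $\hat{\mathrm{err}}_1(\gamma\theta_1-1)\ge\tfrac{\gamma\theta_2\hat{\mathrm{err}}_2}{k+\Gamma}+\theta_3\gamma^2$, which the definition of $N_1$ (hence $\hat{\mathrm{err}}_1$) is engineered to satisfy, with $\Gamma>2C_1C_4C_5/C_3$ guaranteeing $N_1>0$. For $\mathrm{err}_2$, the constant contraction $1-\theta_4$ together with $\Gamma>4/\theta_4-1$ controls the transition factor $\tfrac{(k+\Gamma)^2}{(k+1+\Gamma)^2}$ and converts the bounded $\mathrm{err}_3$ (weighted by $\gamma_k^2$) into the $1/(k+\Gamma)^2$ bound. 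For $\mathrm{err}_3$, I would invoke $\eta=\tfrac{b}{2}\tfrac{1-\rho_W^2}{\rho_W^2}$, which collapses $(1+b^{-1}\eta)\rho_W^2$ to $\tfrac{1+\rho_W^2}{2}$, then split the residual forcing $\theta_7\mathrm{err}_{1,k}+\theta_8\mathrm{err}_{2,k}+\theta_9$ into three pieces each dominated by $\tfrac{\theta_6}{3}\hat{\mathrm{err}}_3$; the factor $\tfrac{3\theta_9}{\theta_6}$ in $\hat{\mathrm{err}}_3$ is precisely the first such piece, and condition \eqref{eqn:k_cond} keeps $\gamma_k^2$ small enough that the stepsize-dependent part of the coefficient does not overwhelm the contraction.

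The main obstacle is the circular coupling of the three sequences: $\mathrm{err}_3$ is forced by $\mathrm{err}_1$ and $\mathrm{err}_2$, $\mathrm{err}_2$ by $\mathrm{err}_3$, and $\mathrm{err}_1$ by $\mathrm{err}_2$, so none of the target bounds can be closed in isolation. The real work is choosing $\hat{\mathrm{err}}_1,\hat{\mathrm{err}}_2,\hat{\mathrm{err}}_3$ simultaneously consistently—a small fixed-point problem whose solution is encoded in the explicit formulas for $N_1,N_2,N_3$ and in the threshold conditions on $\Gamma$, notably $\Gamma>\max\{\sqrt{2C_4C_6},2C_1C_4C_5/C_3,4/\theta_4-1\}$. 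I expect the delicate bookkeeping to lie in verifying that these definitions make all three inductive inequalities hold at once without circularity, and in confirming that the base-case requirements on the initial iterates are compatible with the same constants; once the recursions of part (a) and the choice of $\eta$ are fixed, each individual inductive inequality is a routine, if lengthy, stepsize estimate.
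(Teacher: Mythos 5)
Your proposal follows essentially the same route as the paper's proof: part (a) is obtained by reading $\theta_1,\ldots,\theta_9$ directly off Proposition~\ref{prop:recursions} (with $\gamma_k\le\gamma/\Gamma$ and $\eta=\tfrac{b}{2}\tfrac{1-\rho_W^2}{\rho_W^2}$ collapsing $(1+b^{-1}\eta)\rho_W^2$ to $\tfrac{1+\rho_W^2}{2}$ so that $\theta_4,\theta_6\in(0,1)$), and part (b) is the same simultaneous three-sequence induction in which $N_1,N_2,N_3$, the constants $C_1,\ldots,C_6$, and the thresholds on $\Gamma$ resolve exactly the fixed-point coupling you describe. The only deviations are cosmetic and slightly in your favor: you certify $\theta_6>0$ uniformly in $k$ from the first inequality of \eqref{ineq:Gamma_first_ineq} rather than from the condition \eqref{eqn:k_cond} on $k$ (which better matches the ``for all $k\ge0$'' phrasing of part (a)), and your base-case requirement $N_2\ge\Gamma$ is the correct check for $\mathrm{err}_{2,0}\le\hat{\mathrm{err}}_2/\Gamma^2$ given $\hat{\mathrm{err}}_2=\Gamma\,\mathrm{err}_{2,0}N_2$, whereas the paper's chain asserts $N_2\ge1$ suffices (a slip, though both are absorbed by the paper's normalization of taking $\theta_3$, hence $N_1$, large).
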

\begin{proof} (a) Consider  Proposition \ref{prop:recursions}. It suffices to show that $0< 1-\theta_6<1$. From Proposition \ref{prop:recursions}(c) we have
\begin{align*}
	1-\theta_6 =  (1+b^{-1}\eta)\rho_W^2+\gamma_k^2&(\tfrac{1}{b^2}+\tfrac{1}{b\eta})\left(2L^2\rho_W^2\right. \nonumber\\
	&\left.+\tfrac{2(b-1)L^2(1+\rho_W^2)\rho_W^2}{1-\rho_W^2}\right).
\end{align*}
It can be observed that $0<1-\theta_6$. It remains to show that $1-\theta_6<1$, that is, we need to show
\begin{align}\label{eqn:theta_6}
	(1+b^{-1}\eta)\rho_W^2+\gamma_k^2&(\tfrac{1}{b^2}+\tfrac{1}{b\eta})\left(2L^2\rho_W^2\right.\nonumber \\
	&\left.+\tfrac{2(b-1)L^2(1+\rho_W^2)\rho_W^2}{1-\rho_W^2}\right) < 1.
\end{align}
From the condition on $k$ in equation \eqref{eqn:k_cond}, we have
\begin{align*}
	\left(\tfrac{k+\Gamma}{\gamma}\right)^2 > \tfrac{{\left(\tfrac{1}{b^2}+\tfrac{2\rho_{W}^2}{b^2(1-\rho_{W}^2)}\right)\left(2L^2\rho_W^2+\tfrac{2(b-1)L^2(1+\rho_W^2)\rho_W^2}{1-\rho_W^2}\right)}}{(1+\rho_W^2)/{2}}.
\end{align*}
This can also be written as
\begin{align*}
	\left(\tfrac{\gamma}{k+\Gamma}\right)^2 < \tfrac{(1+\rho_W^2)/{2}}{{\left(\tfrac{1}{b^2}+\tfrac{2\rho_{W}^2}{b^2(1-\rho_{W}^2)}\right)\left(2L^2\rho_W^2+\tfrac{2(b-1)L^2(1+\rho_W^2)\rho_W^2}{1-\rho_W^2}\right)}}.
\end{align*}
Substituting $\eta = \tfrac{b}{2} \left(\tfrac{1- \rho_W^2}{\rho_W^2}\right)$ and from the definition  of the stepsize sequence $\gamma_{k}=\tfrac{\gamma}{k+\Gamma},$ we have
\begin{align*}
	&\gamma_k^2{\left(\tfrac{1}{b^2}+\tfrac{1}{b\eta}\right)\left(2L^2\rho_W^2+\tfrac{2(b-1)L^2(1+\rho_W^2)\rho_W^2}{1-\rho_W^2}\right)}\\ &< \tfrac{(1+\rho_W^2)}{2}
	= 1-\tfrac{(1-\rho_W^2)}{2}=1 - \tfrac{\eta\rho_{W}^2}{b}
\end{align*}
Therefore, we have
\begin{align*}
 \tfrac{\eta\rho_{W}^2}{b}+	\gamma_k^2{\left(\tfrac{1}{b^2}+\tfrac{1}{b\eta}\right)\left(2L^2\rho_W^2+\tfrac{2(b-1)L^2(1+\rho_W^2)\rho_W^2}{1-\rho_W^2}\right)} &<1.
\end{align*}

\noindent (b) %
Without loss of generality, assume that $\theta_3$ is arbitrarily large such that ${{N}}_1\geq1$. Consecutively, we can state that ${{N}}_2 \geq 1$ and ${{N}}_3 \geq 1$.
We have
$
\mbox{err}_{1,0} \leq \mbox{err}_{1,0} n_1\leq \tfrac{\hat{\mbox{err}}_{1} }{\Gamma}\leq \tfrac{\hat{\mbox{err}}_{1} }{0 + \Gamma}. 
$ The first inequality in \eqref{eqn:eik} holds true for $k=0$.
Now from the definition of  $\hat{\mbox{err}}_{2}$, we have
$
\mbox{err}_{2,0} \leq \mbox{err}_{2,0} {{N}}_2\leq \tfrac{\hat{\mbox{err}}_{2} }{\Gamma^2}\leq \tfrac{\hat{\mbox{err}}_{2} }{(0 + \Gamma)^2}
$. This implies the second inequality in \eqref{eqn:eik} holds for $k=0$.
Further, for  $\mbox{err}_{3,0}$, we have
$\mbox{err}_{3,0} \leq \hat{\mbox{err}}_{3}.$
Therefore, the third inequality in \eqref{eqn:eik} holds true for $k=0$.
Now let the induction hypothesis holds true for  some $k\geq 0$. 
From the  definition of $n_1$, we have
$C_2 \leq\left(C_3\Gamma - 2C_1C_4C_5\right){{N}}_1.$
Therefore, we have $ C_1{{N}}_2\Gamma +C_2 \leq C_3{{N}}_1\Gamma.$
Next, substituting the values of $C_1$, $C_2$, and $C_3$, in the above, we  have
\begin{align*}
\tfrac{\hat{\mbox{err}}_1}{\left(k+\Gamma\right)^2} \leq \tfrac{\gamma\theta_1\hat{\mbox{err}}_1}{\left(k+\Gamma\right)^2} - \tfrac{\gamma \theta_2\hat{\mbox{err}}_2}{\left(k+\Gamma\right)^2}-\tfrac{\theta_3\gamma^2}{\left(k+\Gamma\right)^2} .
\end{align*}
We have $\gamma \geq \gamma_k$. Substituting in the above
\begin{align*}
\tfrac{\hat{\mbox{err}}_1}{\left(k+\Gamma\right)^2} \leq \tfrac{\gamma\theta_1\hat{\mbox{err}}_1}{\left(k+\Gamma\right)^2} - \tfrac{\gamma_k \theta_2\hat{\mbox{err}}_2}{\left(k+\Gamma\right)^2}-\tfrac{\theta_3\gamma^2}{\left(k+\Gamma\right)^2}.
\end{align*}
Further, by bounding $\tfrac{\hat{\mbox{err}}_1}{(k+\Gamma)(k+\Gamma+1)} \leq\tfrac{\hat{\mbox{err}}_1}{(k+\Gamma)^2}$, we obtain
$
\tfrac{\hat{\mbox{err}}_1}{\left(k+\Gamma\right)} - \tfrac{\hat{\mbox{err}}_1}{\left(k+\Gamma +1\right)} \leq \tfrac{\gamma\theta_1\hat{\mbox{err}}_1}{\left(k+\Gamma\right)^2} - \tfrac{\gamma_k \theta_2\hat{\mbox{err}}_2}{\left(k+\Gamma\right)^2}-\tfrac{\theta_3\gamma^2}{\left(k+\Gamma\right)^2} .
$ By induction hypothesis, and Theorem\ref{thm:rate}(a),  the first inequality of \eqref{eqn:eik} holds for $k+1$.
%
Next, from the definition of ${{N}}_2$, we have
$
{{N}}_2  \geq \tfrac{C_4C_5}{ \Gamma}{{N}}_1 \geq C_4 {{N}}_3.
$
Substituting for $C_4$ and rearranging the terms, we obtain
$
\hat{\mbox{err}}_3 \leq \tfrac{\theta_4}{2\theta_5\gamma^2}\ \hat{\mbox{err}}_2 \leq \tfrac{1}{2\theta_5\gamma^2}\left(\theta_4 - \tfrac{\theta_4}{2}\right)\ \hat{\mbox{err}}_2.
$
From $\tfrac{2\Gamma +1}{\left(\Gamma +1\right)^2} \leq \tfrac{\theta_4}{2}$, the preceding inequality becomes
$
 \tfrac{2\Gamma +1}{\left(\Gamma +1\right)^2} \hat{\mbox{err}}_2 \leq \theta_4  \hat{\mbox{err}}_2  - 2\theta_5\gamma^2\hat{\mbox{err}}_3.
$ Further, from  $\tfrac{2k + 2\Gamma +1}{\left(k + \Gamma +1\right)^2} \leq \tfrac{2\Gamma +1}{\left(\Gamma +1\right)^2}$, we have  
$
\tfrac{\hat{\mbox{err}}_2}{\left(\Gamma +1\right)^2} - \tfrac{\hat{\mbox{err}}_2}{\left(k+\Gamma +1\right)^2} \leq \tfrac{\theta_4  \hat{\mbox{err}}_2}{\left(\Gamma +1\right)^2}  - \tfrac{2\theta_5\gamma^2\hat{\mbox{err}}_3}{\left(\Gamma +1\right)^2}$. 
By induction hypothesis and Theorem \ref{thm:rate} (a), we show the
 second inequality of  \eqref{eqn:eik} holds for $k+1$. 

Next, from the definition of $n_3$, we have
${{N}}_3 \Gamma \geq C_5{{N}}_1.$
Substituting value of $C_5$ and rearranging the terms, we obtain
$
 \tfrac{3 \theta_7}{\Gamma} \hat{\mbox{err}}_1 \leq \theta_6 \hat{\mbox{err}}_3.
$ Now, from the upper bound of $\Gamma$, we have $\Gamma^2 \geq 2C_4C_6$. From this, we have $
\Gamma^2 \left(\tfrac{C_5}{ \Gamma}{{N}}_1\right) \geq 2C_4C_6 \left(\tfrac{C_5}{ \Gamma}{{N}}_1\right) \geq C_6\left(\tfrac{2C_4C_5}{ \Gamma}{{N}}_1\right)
$. Substituting for $C_6$ and rearranging terms, we have
$
\tfrac{3\theta_8}{\Gamma^2} \hat{\mbox{err}}_2 \leq \theta_6\hat{\mbox{err}}_3.
$ From  the preceding two results and $3\theta_9 \leq \theta_6 \hat{\mbox{err}}_3$, we have 
$
\left(1-\theta_6\right) \hat{\mbox{err}}_3+\tfrac{\theta_7}{k + \Gamma} \hat{\mbox{err}}_1 + \tfrac{\theta_8}{ \left(k +\Gamma\right)^2} \hat{\mbox{err}}_2+\theta_9 \leq  \hat{\mbox{err}}_3.
$ By induction hypothesis and Theorem \ref{thm:rate} (a), we 
conclude that the third inequality of \eqref{eqn:eik} holds for $k+1$. 
\end{proof}

\section{Numerical Experiments} \label{sec:num_expt}
We provide numerical experiments for comparing the performance of Algorithm \ref{algorithm:RB-DSGT} with other gradient tracking schemes. For the experiments, we consider the distributed regularized logistic regression loss minimization problem. Let the data be denoted by $\mathcal{D}\triangleq \{\left(u_j,v_j\right)\in \mathbb{R}^n\times \{-1,+1\}\mid j \in \mathcal{S}\}$ where $\mathcal{S}\triangleq \{1,\ldots,s\}$ denotes the index set. Let $\mathcal{S}_i$ denote the data locally known to agent $i$ where $\cup_{i=1}^m\mathcal{S}_i =\mathcal{S}_{\tiny\mbox{train}}$. The problem above can be formulated as $\text{min}\txsum_{i=1}^m f_i(x)$ where we define local functions $f_i$ as 
\begin{align*} 
	f_i(x) \triangleq \tfrac{1}{S_{\tiny\mbox{train}}} \txsum_{j \in \mathcal{S}_i}\ln \left(1+\exp\left(-v_j u_j^Tx\right)\right)+\tfrac{\mu}{2m}\|x\|^2.
\end{align*}
Here, $u_j \in \mathbb{R}^n$ denotes attributes, and $v_j \in \{-1,1\}$ for $j \in \mathcal{S}_i$ denotes the binary label of the $j^{th}$ data point.

We simulate the proposed distributed randomized block stochastic gradient tracking method (DRBSGT) algorithm on a system consisting of $m$ agents. We provide a comparison of suboptimality and consensus metrics with the two other existing methods namely, distributed stochastic gradient tracking (DSGT) \cite{Pu_Nedic_20} and adapt then combine (ATC), a variant of block distributed Successive cONvex Approximation algorithm over Time-varying digrAphs (block SONATA) for convex regimes \cite{Notarnicol_20}.

\noindent \textbf{Setup.} The simulations are performed on two data sets with $m$ agents. We use the complete and the ring graph structure to represent the communication among the agents. We implement the simulations on MNIST and Synthetic data set for $m = 5$ and $m = 5, 10 $, respectively. The MNIST data set consist of $50,000$ labels and $784$ attributes, whereas the Synthetic data set has $10,000$ labels and $10,000$ attributes with a Gaussian distribution with mean as $5$ and standard deviation as $0.5$. We consider different parameters for different data sets mentioned in Table \ref{table:Parameters}. Further, we use $\gamma = 1e+1$, $\Gamma = 1e+4$,  $\mu = 1e-1$, and the batch size for computing gradient from each agent $\epsilon = 1e+2$ for both data sets. Taking into account the stochasticity involved in DRBSGT and DSGT schemes, we have obtained different sample paths in our implementations. 
The highlighted areas in  in the plots in Figure \ref{fig:Alg_Output} represent the confidence intervals. We provide 90\% confidence intervals on the errors of the sample paths for each setting in Table \ref{table:CI_compare}. We choose the total sample paths for MNIST and Synthetic data sets as $5$ and $10$, respectively. 

\begin{table}[t] 
\tiny
	\renewcommand\thetable{2}
	\captionsetup{labelformat=empty}
	\captionof{table}{ \footnotesize \green{{Table 2}: Parameters used in the numerical experiments}}
	\centering{\green{
		\setlength\tabcolsep{4pt}
		\begin{tabular}{|l|c|c|c|}
			\hline
			&MNIST$($$m$$=$$5$$)$&Synthetic$($$m$$=$$5$$)$&Synthetic$($$m$$=$$10$$)$\\
			\hline
			\multirow{5}{4em}{DRBSGT} &  $n =784 $& $n =1e+4$ & $n =1e+4$\\
			&$ |S_{\tiny\mbox{train}}| = 5e+4$ & $ |S_{\tiny\mbox{train}}| = 1e+4$ & $ |S_{\tiny\mbox{train}}| = 1e+4$\\ 
			&$\mu = 1$$e$$-$$1$  &$\mu = 1$$e$$-$$1$  &$\mu = 1$$e$$-$$1$\\ 
			&$\epsilon = 1$$e$$+$$2$ &$\epsilon = 1$$e$$+$$2$ &$\epsilon = 1$$e$$+$$2$\\
			& $b = 14$  & $b = 100$& $b = 100$\\ 
			\hline
			\multirow{4}{3em}{DSGT} &   $n =784 $& $n =1e+4$ & $n =1e+4$\\
			&$|S_{\tiny\mbox{train}}| = 5e+4$ & $|S_{\tiny\mbox{train}}|= 1e+4$ & $|S_{\tiny\mbox{train}}| = 1e+4$\\
			&$\mu = 1$$e$$-$$1$ &$\mu = 1$$e$$-$$1$  &$\mu = 1$$e$$-$$1$\\  
			&$\epsilon = 1$$e$$+$$2$ &$\epsilon = 1$$e$$+$$2$&$\epsilon= 1$$e$$+$$2$\\
			\hline
			\multirow{4}{3em}{ATC} &  $n =784 $& $n =1e+4$ & $n =1e+4$\\
			&$|S_{\tiny\mbox{train}}| = 5e+4$ & $|S_{\tiny\mbox{train}}|= 1e+4$ & $|S_{\tiny\mbox{train}}| = 1e+4$\\
			&$\mu = 1$$e$$-$$1$  &$\mu = 1$$e$$-$$1$  &$\mu = 1$$e$$-$$1$ \\ 
			& $b = 14$  & $b = 100$& $b = 100$\\ 
			\hline
	\end{tabular} }}
	\label{table:Parameters}
	\vspace{-.2in}
\end{table}  

\begin{table}[t]
\tiny
	\renewcommand\thetable{3}
	\captionsetup{labelformat=empty}
	\captionof{table}{\scriptsize \green{{Table 3}: $90\%$ CIs for objective values of the three schemes (C: Complete, R: Ring})}
	\centering{			
		\green{
		\setlength\tabcolsep{2pt}
		\begin{tabular}{| l | c |  c | c |} 
			\hline
			& MNIST, $m=5$ &  Synthetic, $m=5$& Synthetic, $m=10$ \\
			\hline
			DRBSGT (C)&$[1.05$$e$$+$$1, 1.06$$e$$+$$1]$&$[9.28$$e$$+$$0, 9.32$$e$$+$$0]$&$[5.07$$e$$+$$0, 5.15$$e$$+$$0]$\\
			\hline
			DRBSGT (R)&$[1.05$$e$$+$$1, 1.06$$e$$+$$1]$&$[9.28$$e$$+$$0, 9.31$$e$$+$$0]$&$[5.10$$e$$+$$0, 5.12$$e$$+$$0]$ \\
			\hline 
			DSGT (C)&$[4.98$$e$$+$$0, 5.47$$e$$+$$0]$&$[6.51$$e$$+$$1, 14.49$$e$$+$$1]$&$[$$14.46$$e$$+$$1, 31.16$$e$$+$$1]$ \\
			\hline
			DSGT (R)&$[5.06$$e$$+$$0, 5.22$$e$$+$$0]$&$[4.62$$e$$+$$1, 9.89$$e$$+$$1]$&$[2.22$$e$$+$$1, 6.01$$e$$+$$1]$\\
			\hline
			ATC (C)&$8.08$$e$$+$$3$&$2.49$$e$$+$$4$&$2.51$$e$$+$$4$\\
			\hline
			ATC (R)&$8.08$$e$$+$$3$&$2.49$$e$$+$$4$&$2.51$$e$$+$$4$\\
			\hline
	\end{tabular}}}
	\label{table:CI_compare}
	\vspace{-.2in}
\end{table}

\noindent \textbf{Insights.}  In Figure \ref{fig:Alg_Output}, the proposed DRBSGT scheme converges in both MNIST and Synthetic data sets, considering both the suboptimality and consensus metrics. We observe that DRBSG performs well compared to its counterparts. Note that Figure \ref{fig:Alg_Output} presents the performance of the schemes with respect to the number of local gradient evaluations. This perhaps explains the slow convergence of ATC as the scheme is deterministic and cycles through all the blocks at each iteration. From Table \ref{table:CI_compare} and Figure \ref{fig:Alg_Output}, we observe that when either the number of attributes or the number of agents increases, the performance of DRBSGT improves. We do not observe any significant difference in the performance in terms of the network connectivity structure.

\section{Concluding Remarks}\label{sec:conclusion}
We address a networked distributed optimization problem in stochastic regimes where each agent can only access unbiased estimators of its local gradient mapping.  Motivated by big data applications, we address this problem considering a possibility of large-dimensionality of the solution space where  the computation of the local gradient mappings may become expensive. We develop a distributed randomized block stochastic gradient tracking scheme and provide the  non-asymptotic convergence rates of the order $1/k$ and $1/k^2$ in terms of an optimality metric and a consensus violation metric, respectively.  We validate the performance of the proposed algorithm on the MNIST and a synthetic data set under different settings of the communication graph.









\bibliographystyle{plain}
\bibliography{ref_hdk_fy_jy-v02}
\end{document}